\newtheorem{Theorem}{Theorem}
\newtheorem{lemma}{Lemma}
\newtheorem{Corollary}{Corollary}
\theoremstyle{definition}
\newtheorem{Remark}{Remark}
\date{}
\def\thetanorm{{\lVert \btheta \rVert^2}}
\def\IHAT{{\mathsf I(\bthetahat)}}
\def\bmu{{\boldsymbol{\mu}}}
\def\bI{{\mathbf{I}}}
\def\bZ{{\mathbf{Z}}}
\def\bX{{\mathbf{X}}}
\def\bbeta{{\bm{\beta}}}
\def\thetaML{{\hat{\btheta}^{\textsc{ML}}}}
\def\thetaJS{{\hat{\btheta}^{\textsc{JS}}}}
\def\thetaJSm{{\hat{\btheta}^{\textsc{JS}}_m}}
\def\thetaJSmplus{{\hat{\btheta}^{\textsc{JS}}_{m+}}}
\def\thetaJSV{{\bthetahat^{\textsc{JS}}_\mathbb{V}}}
\def\thetaJSplus{{\hat{\btheta}^{\textsc{JS}}_+}}
\def\barX{{\overbar{\bX}}}
\def\calN{{\cal N}}
\def\calF{{\cal F}}
\def\calC{{\cal C}}
\def\calD{{\cal D}}
\def\Ex{{\mathbb{E}}}
\def\Extheta{\Ex_1^\btheta}
\def\P{{\mathbb P}}
\def\bI{{\mathbf{I}}}
\def\btheta{{\boldsymbol{\theta} }}
\def\bthetahat{{\hat{\boldsymbol{\theta} }}}
\def\bbR{{\mathbb{R}}}
\def\bbV{{\mathbb{V}}}
\def\projV{{P_\mathbb{V}}}
\def\KL{{\mathsf I(\btheta)}}
\def\bX{{\mathbf{X}}}
\def\bZ{{\mathbf{Z}}}
\def\TGLR{{T^{\textsc{G}}_b}}
\def\TSRRS{{T^{\textsc{S}}_b}}
\def\TWL{{T^\textsc{WL}_b }}
\newcommand\given[1][]{\:#1\vert\:}
\newcommand\norm[1]{\lVert#1\rVert}
\newcommand\delay[1]{\calD^\btheta\left(#1\right)}
\newcommand\MSE[1]{\mathsf{MSE}_\btheta(#1)}
\def\define{\coloneqq}
\newcommand{\overbar}[1]{\mkern 1.5mu\overline{\mkern-1.5mu#1\mkern-1.5mu}\mkern 1.5mu}
\def\blue#1{{\color{black}#1}}
\begin{document}
\bstctlcite{MyBSTcontrol}
\title{Quickest Change Detection for Multiple Data Streams Using the~James-Stein~Estimator}

\author{Topi~Halme,~\IEEEmembership{Student Member,~IEEE},~Venugopal~V.~Veeravalli,~\IEEEmembership{Fellow,~IEEE}
and ~Visa~Koivunen,~\IEEEmembership{Fellow,~IEEE}%

\thanks{T. Halme and V. Koivunen are with the Department of Information and Communcations Engineering, Aalto University, Espoo, Finland. (e-mail: \{topi.halme, visa.koivunen\}@aalto.fi). V.V. Veeravalli is with the ECE Department and Coordinated Science Laboratory, The Grainger College of Engineering, University of Illinois at Urbana-Champaign, Urbana, IL, 61820 USA. (e-mail: vvv@illinois.edu). V.V. Veeravalli was supported in part by a Fulbright-Nokia Distinguished Chair in Information and Communication Technologies during his visit to Aalto University in Fall 2023 when this work was initiated.}
}
\IEEEoverridecommandlockouts

\maketitle
\vspace*{-0.5in}
\begin{abstract}
The problem of quickest change detection is studied in the context of detecting an arbitrary unknown mean-shift in multiple independent Gaussian data streams. The James-Stein estimator is used in constructing detection schemes that exhibit strong detection performance both asymptotically and non-asymptotically. Our results indicate that utilizing the James-Stein estimator in the recently developed window-limited CuSum test constitutes a uniform improvement over its typical maximum likelihood variant. That is, the proposed James-Stein version achieves a smaller detection delay simultaneously for all possible post-change parameter values and every false alarm rate constraint, as long as the number of parallel data streams is greater than three. Additionally, an alternative detection procedure that utilizes the James-Stein estimator is shown to have asymptotic detection delay properties that compare favorably to existing tests. The second-order asymptotic detection delay term is reduced in a predefined low-dimensional subspace of the parameter space, while second-order asymptotic minimaxity is preserved. The results are verified in simulations, where the proposed schemes are shown to achieve smaller detection delays compared to existing alternatives, especially when the number of data streams
is large.
\end{abstract}

\begin{IEEEkeywords}
Sequential change detection, James-Stein estimator, shrinkage estimation, average detection delay
\end{IEEEkeywords}

\section{Introduction}

The detection of changes in the underlying statistical properties of online data streams is an important task in a variety of domains ranging from sensor networks, wireless communications, radar and the Internet of Things (IoT) to biosurveillance and medical image processing. 
In the quickest change detection (QCD) problem, a sequence of observations undergoes a change in its distribution at an unknown time. The objective of the observer is to detect the change with minimal delay, subject to constraints on the prevalence of false alarms. For general references on quickest change detection, we refer to \cite{TARTAKOVSKY_BOOK, POOR_BOOK, XIE_2021_REVIEW, VEERAVALLI_2014}.

In this paper, we focus on a setting where the change simultaneously affects a large number of data streams with unknown, possibly differing, magnitudes under Gaussian noise. Such a formulation arises naturally when, e.g., a distributed sensor network with $K$ sensors is used to monitor events that vary in time and space. The intensity of the event or disruption observed by a particular sensor may be related to its spatial proximity or its overall sensing capability, which can vary from sensor to sensor. Mathematically, this model corresponds to detecting an arbitrary shift in the mean vector $\btheta$ of a $K$-dimensional multivariate Gaussian distribution. 

In change detection with unknown post-change parameters, common procedures can generally be divided into three main categories: generalized likelihood ratio (GLR) based \cite{LORDEN_1971, LAI_1998}, mixture approaches \cite[sec. 8.3.]{TARTAKOVSKY_BOOK}, and adaptive estimates based \cite{SPARKS_2000,LORDEN_2005, CAO_2018, XIE_2023}. Particularly common is a GLR extension of the CuSum procedure, originally introduced in \cite{LORDEN_1971}. The GLR-CuSum and its computationally simpler windowed version \cite{LAI_1998} are asymptotically first-order optimum  under very general conditions (including non-Gaussian models) as the average run length (ARL) to false alarm $\gamma$ tends to infinity. Recently, a window-limited adaptive CuSum (WL-CuSum) test \cite{XIE_2023} was developed as an alternative to the GLR-CuSum test. The WL-CuSum test possesses similar first-order optimality properties as the windowed GLR-CuSum test, while being less demanding computationally \cite{XIE_2023}.  

While the GLR and WL-CuSum tests are first-order asymptotically optimal for any fixed dimension $K$, the higher-order terms of the asymptotic average detection delay are heavily influenced by $K$. In exponential families, the second-order term of the asymptotic average delay (on the order of $\log \log \gamma$) of GLR-CuSum increases linearly in $K$ \cite[pp. 428]{TARTAKOVSKY_BOOK}. In this paper, we show that the effect of the dimension on the detection delay can be reduced by estimating the unknown parameter with shrinkage estimators \cite{CANDES_2006, FOURDRINIER_BOOK}. In particular, we propose using the James-Stein estimator \cite{JAMES_STEIN_1961} in place of the standard maximum likelihood estimator in existing adaptive change detection tests. The James-Stein estimator dominates the maximum likelihood estimator in terms of mean squared-error (MSE) when estimating the mean of a multivariate Gaussian distribution when $K \geq 3$. This paper shows that this property can be exploited to significantly improve adaptive change detection tests such as those presented in \cite{LORDEN_2005} and \cite{XIE_2023} in Gaussian data models.

Although the James-Stein estimator has been widely recognized and studied in the mathematical statistics community, there are only very few direct applications of it in statistical signal processing and related fields. 
It has been utilized for tracking the state of noisy dynamical systems \cite{MANTON_1998, CAMPANA_2018}, for improved estimation of the entropy of categorical random variables \cite{HAUSSER_2009}, and for image denoising \cite{WU_2013_IMAGE, NGUYEN_2017_IMAGE}. We are not aware of any previous applications of the James-Stein estimator to sequential change detection.

In the context of change detection, general shrinkage estimation has been previously considered in \cite{WANG_2015}. It was shown that linear and hard-thresholding shrinkage estimators can be used to reduce the second-order detection delay term in Gaussian data models. This may lead to a smaller delay for moderate ARL when $K$ is sufficiently large. However, the gains obtained by the shrinkage estimators considered in \cite{WANG_2015} come at the cost of sacrificing first-order asymptotic optimality.
In contrast, we show that utilizing the James-Stein estimator can reduce the second-order delay term in a predefined low-dimensional subspace of the post-change parameter space while simultaneously maintaining second-order asymptotic minimaxity, a condition that is stronger than first-order optimality.

There exists a considerable body of literature on studying the case where the change affects only a portion of the data streams, i.e. $\btheta$ has a small number of non-zero components. 
The special case where the change is observed by only one sensor was studied in \cite{TARTAKOVSKY_2006, TARTAKOVSKY_2004}, see also \cite[Ch. 9]{TARTAKOVSKY_BOOK}. Methods for the setting where the change affects an unknown arbitrary set of streams were proposed and analyzed in \cite{MEI_2010, FELLOURIS_2016} under the assumption that the pre- and post-change distributions are completely specified, and in \cite{XIE_2013, LIU_2019} under post-change parametric uncertainty. The test proposed in \cite{XIE_2013} is a mixture procedure in which each stream is hypothesized to be affected with a pre-specified probability $p$. If $p = 1$, implying that all streams are expected to be affected, the procedure reduces to the GLR test. The procedures we propose in this paper can also be used for detecting such sparse changes. The procedures are not, however, designed specifically with the detection of sparse changes as the objective. In fact, the James-Stein estimator is known to be most effective when the estimated vector is not too sparse \cite{CANDES_2006}, and therefore we expect our proposed tests to be most useful when the proportion of affected data streams is reasonably large.

The main contributions of the paper are summarized below:
\begin{enumerate}
\item We propose using the James-Stein shrinkage estimator for quickest change detection in multiple data streams.
\item \blue{We show that deploying the (positive-part) James-Stein estimator in place of the maximum likelihood estimator in the WL-CuSum test can significantly reduce the expected detection delay in the detection of a Gaussian mean-shift. We derive a non-asymptotic upper bound on the detection delay the WL-CuSum test which is valid for any ARL constraint~$\gamma$, post-change parameter $\btheta$, and estimator $\bthetahat$ subject to some technical assumptions. The bound is seen to crucially depend on the MSE of the chosen estimator. Since the JS estimator dominates ML estimator in MSE, our results indicate that the JS estimator consitutes a uniform improvement over the ML estimator in the considered setting.} The JS estimator is easy to compute from the sufficient statistic (which in the considered setting is equal to the MLE), and hence extra computation required for the James-Stein based test is negligible.
\item If computational efficiency is not paramount, we show that extending the SRRS test of \cite{LORDEN_2005} with the James-Stein estimator yields a test that is second-order asymptotically minimax and superior compared to existing procedures such as the GLR-CuSum. The second-order detection delay term of the proposed JS-SRRS test is shown to be independent of $K$ in a prespecified lower dimensional subspace of the parameter space. For parameter values outside this subspace, the first and second-order terms of the detection delay match those of common alternatives. Additionally, we highlight a connection between the improvement of the second-order term and the superefficiency of the James-Stein estimator.
\item Overall, the analytical results and the simulation experiments verifying them suggest that in the considered setting the proposed JS-SRRS and JS-WL-CuSum are uniform improvements over their maximum likelihood counterparts. They also perform favorably in comparison to GLR-CuSum when $K$ is moderate to large in simulations.
\end{enumerate}

The rest of this paper is structured as follows. In Section \ref{sec:prelim}, we formulate the problem, and provide background on related existing change detection procedures. The James-Stein estimator and its properties are also introduced. In Section \ref{sec: main}, we propose and analyze the JS-WL-CuSum and JS-SRRS change detection tests. In Section \ref{sec:sim}, we present simulation experiments that demonstrate the utility of the proposed tests. In Section \ref{sec: conclusion}, we provide some concluding remarks.
\smallskip

\noindent\textbf{Notation.} 
For time steps $m$ and $n$, $m < n$, we use $\bX_m^n$ to 
denote the set of variables $\{\bX_m,\ldots,\bX_n\}$. The value of an estimator $\bthetahat$ applied to $\bX_m^{n-1}$ is denoted by $\bthetahat_{m,n}$, meaning that $\bthetahat_{m,n}\define\bthetahat(\bX_m^{n-1})$. 
\section{Preliminaries}\label{sec:prelim}
\subsection{Problem formulation}

Let $\{\bX_n\}$ be a sequence of sequentially observed multivariate Gaussian random variables with dimension $K \geq 3$. Suppose there exists a deterministic and unknown change-point $\nu$ such that
\begin{equation}
\bX_n \overset{\mbox{i.i.d}}{\sim} \begin{cases}
~~\calN(\btheta_0, \sigma^2\bI), \quad &n = 1,2,...,\nu -1  \\
~~\mathcal{N}(\btheta, \sigma^2\bI), ~~~ &n = \nu, \nu+1,... 
\end{cases}
\end{equation}
i.e., a mean shift from $\btheta_0$ to $\btheta$ occurs at time $\nu$ for some $\btheta \in \bm\Theta \subseteq \mathbb R^K \setminus \{\bm \btheta_0\}$. Assume that $\btheta_0$ and $\sigma^2$ are known, or can e.g. be estimated from pre-existing training data sufficiently well to justify the assumption. We can then set $\btheta_0 = \bm 0$ and $\sigma^2=1$ without loss of generality by considering the normalized variables $(\bX_n - \btheta_0)/\sigma.$ The setup can be thought of as observing $K$ independent data streams that undergo a change in mean at a common time $\nu$. In general, it is only assumed that $\btheta \not = \bm 0$, i.e. the change can affect an arbitrary subset of the streams, and the magnitude of the change may be different for each stream. For practical use, it may be reasonable to restrict the set $\bm \Theta$ to include only practically meaningful changes, by e.g. setting $\bm \Theta = \{\btheta : \norm\btheta > \vartheta\}$ for some barrier value $\vartheta \geq 0$. 

We denote the filtration generated by the observation sequence by $\{\calF_n\}$, i.e. $\calF_n = \sigma(\bX_1,...,\bX_n)$. The probability measure when the change to $\btheta$ occurs at time $\nu$ is denoted by $\P_\nu^\btheta$, and $\Ex_\nu^\btheta$ denotes the corresponding expectation. If a change never occurs, the probability measure and expectation are denoted by $\P_\infty$ and $\Ex_\infty$, respectively. The general goal is to detect the change as quickly as possible while controlling the rate of false alarms. Mathematically, a sequential change detection procedure is a stopping time $T$ adapted to $\{\calF_n\}$, meaning that $\{T=n\} \in \mathcal{F}_n.$

\noindent\textbf{False Alarm Criterion.} The rate of false alarms for a procedure $T$ is quantified by the average run length (ARL) to false alarm in the pre-change regime. The family of tests with ARL at least $\gamma$ is denoted by $\calC_\gamma$, i.e. 
 \begin{equation}
     \calC_\gamma \define \left\{T : \Ex_\infty(T) \geq \gamma\right\}.
 \end{equation}

\noindent\textbf{Delay Criterion.}
As is standard, the detection delay is measured in the "worst worst-case" as defined by Lorden  \cite{LORDEN_1971}. For any $\btheta \in \bm \Theta$ 
\begin{equation}\label{lorden}
\delay T \define \underset{\nu\geq 1}{\sup}\text{ ess} \sup \Ex^{\btheta}_\nu((T-\nu+1)^+ | \mathcal{F}_{\nu-1}),
\end{equation}
with the worst-case being considered over both all possible change-points $\nu$ and pre-change observations.

We write $f(\cdot, \btheta)$ for the density function of a $\calN(\btheta, \bI)$ random variable. For any $\btheta \in \bm \Theta$,  the log-likelihood ratio between the hypotheses $H_1^{\btheta}: \{\nu = t,\btheta = \btheta\}$ and $H_0: \{\nu > n\}$ at time $n$ is denoted by $\Lambda^{\btheta}_{t,n}$, i.e.
\begin{equation}\label{eq: LLR_lambda_def}
    \Lambda^{\btheta}_{t,n} \define \sum_{m = t}^n \log \frac{f(\bX_m, \btheta)}{f(\bX_m, \bm 0)} = \sum_{m = t}^n\left( {\btheta}^\top \bX_m - \frac{1}{2} \norm{\btheta} ^2\right).
\end{equation}
Finally, the Kullback-Leibler divergence between $f(\cdot, \btheta)$ and $f(\cdot, \bm 0)$ is denoted by $\KL$, which for $\sigma^2 =1$ is given by
\begin{equation}\label{eq: KL_def}
    \KL \define \Ex^\btheta_1\left[\Lambda^\btheta_{1,1}\right] = \frac{\norm\btheta^2}{2}.
\end{equation}

\subsection{Review of existing procedures}\label{sec: review}

If the post-change parameter vector $\btheta$ is known, it is well known \cite{MOUSTAKIDES_1986} that the following CuSum test $T^\textsc{C}_b$ 
\begin{equation}\label{eq: CuSum definition}
    T^\textsc{C}_b \define  \inf\left\{n : \max_{1\leq t \leq n} \Lambda^\btheta_{t,n} > b\right\},
\end{equation}
exactly minimizes $\calD_\btheta(T)$ in the class $T \in \calC_\gamma$ if the stopping threshold $b$ is chosen such that $\Ex_\infty(T^\textsc{C}_b) = \gamma$. The CuSum statistic $W_n \define \max_{1\leq t \leq n} \Lambda^\btheta_{t,n}$ in \eqref{eq: CuSum definition} can be updated via the recursion
\begin{equation}
    W_n = W_{n-1}^+ + \log \frac{f(\bX_n, \btheta)}{f(\bX_n, \bm 0)} 
\end{equation}
where $z^+ = \max(0, z)$ and $W_0 = 0$.
As $\gamma \to \infty$, the CuSum test with $b = \log \gamma$ satisfies $T^\textsc{C}_b \in \calC_\gamma$ and \cite[Lemma 1]{XU_2021} %
\begin{equation}\label{eq: CuSum delay}
    \delay{T^\textsc{C}_b} = \frac{\log\gamma}{\KL} + O(1),
\end{equation}
where $O(1)$ denotes a constant that does not depend on $\gamma$. We note that the expression in \eqref{eq: CuSum delay}, as well as other results highlighted in this subsection are not just for the Gaussian mean-shift model, but apply to more general probability models as well. In the considered Gaussian case it is possible to find accurate approximations for the constant terms for some tests (see e.g. \cite{TARTAKOVSKY_BOOK, SIEGMUND_1995}), though we do not consider them in this paper.

When the post-change parameters are not known, a natural approach that is popular in the literature is a generalized likelihood ratio based double maximization procedure \cite{LAI_1998, SIEGMUND_1995} 
\begin{align}
&\TGLR \define \inf\left\{n : \max_{1\leq t \leq n}\sup_{\btheta \in \bm \Theta} \Lambda^{\btheta}_{t,n} > b\right\} \label{eq:GLR_stat}\\
&= \inf\left\{n : \max_{1\leq t \leq n}\sum_{m = t}^n\left( {(\overbar{\bX}_{t}^n)}^\top \bX_m - \frac{1}{2} \norm{\overbar{\bX}_{t}^n} ^2\right) > b\right\}\label{eq:GLR_stat_Gaussian}
\end{align}
where $\overbar{\bX}_{t}^n \in \mathbb R^K$ is the sample mean of $\{\bX_t,\ldots, \bX_n\}$. Unlike the CuSum test, the test statistic in \eqref{eq:GLR_stat_Gaussian} cannot be updated recursively.
As $\gamma \to \infty$, the detection delay of the GLR test with a properly chosen threshold grows as \cite[sec. 8.3.]{TARTAKOVSKY_BOOK}
\begin{equation}\label{eq:GLR_delay}
    \delay\TGLR = \frac{\log\gamma}{\KL} + \frac{K\log\log\gamma}{2\KL} + O(1),
\end{equation}
for all $\btheta \in \bm \Theta$. Since the first-order term in \eqref{eq:GLR_delay} matches the  first-order term of the optimal CuSum delay \eqref{eq: CuSum delay}, the GLR test is said to be asymptotically first-order optimal for all $\btheta$. However, there is a price paid for the parametric uncertainty that manifests as the additional $\log \log \gamma$ term. In fact, an information-theoretic lower bound from \cite{SIEGMUND_2008} shows that the factor $K/(2\KL)$ is unavoidable for the second-order term in the following minimax sense:
\begin{lemma}[Information lower bound]\label{lemma: inf_LB}
Let $\bm \Theta$ satisfy sufficient continuity conditions as detailed in \cite[Thm. 1]{SIEGMUND_2008}. Then, as $\gamma \to \infty$
\begin{equation}\label{eq: inf_LB}
    \inf_{T \in \calC_\gamma}\sup_{\btheta \in \bm \Theta} \KL \delay T \geq \log \gamma + \frac{K\log\log\gamma}{2} + O(1).
\end{equation}
\end{lemma}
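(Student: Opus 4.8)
The plan is to obtain the statement as a specialization of the general minimax lower bound of \cite[Thm.~1]{SIEGMUND_2008} to the Gaussian mean-shift family, so the first task is purely a matter of verifying hypotheses and matching constants. I would check that the location family $f(\cdot,\btheta)=\calN(\btheta,\bI)$ together with the admissible set $\bm\Theta$ meets the smoothness and boundedness conditions of that theorem: finite and continuous Fisher information, which here equals $\bI$, and a Kullback--Leibler divergence $\KL=\norm{\btheta}^2/2$ that varies continuously over $\bm\Theta$. Once the hypotheses hold, the theorem delivers $\log\gamma+\tfrac{d}{2}\log\log\gamma+O(1)$, where $d$ is the dimension of the parameter being estimated; substituting $d=K$ yields the claim.

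Because I would nonetheless want to understand \emph{why} the constant $K/2$ is forced, I would reconstruct the mechanism behind that theorem. First, I would pass from the minimax quantity $\sup_{\btheta}\KL\,\delay{T}$ to a Bayesian one by fixing a prior $\pi$ supported on a small ball inside $\bm\Theta$ (on which $\KL$ is essentially constant) and using $\sup_{\btheta}\geq\int(\cdot)\,\mathrm d\pi$. On this Bayes problem the optimal detector is driven by the mixture likelihood ratio $R_n=\sum_{t\leq n}\int \exp(\Lambda^{\btheta}_{t,n})\,\mathrm d\pi(\btheta)$, and every competing test has at least this Bayes delay, so it suffices to lower bound the time for $R_n$ to cross a threshold of order $\log\gamma$ (the level forced by the constraint $\Ex_\infty(T)\geq\gamma$ through the usual change-of-measure/false-alarm argument that also produces the first-order term $\log\gamma$).

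The heart of the argument is a Laplace expansion of the inner integral. Under $\P^{\btheta}_1$ the integrand is maximized near the sample mean $\overbar{\bX}_1^n\approx\btheta$, and integrating out the $K$-dimensional parameter against $\pi$ contributes a factor proportional to the window length raised to the power $-K/2$; taking logarithms gives $\log R_n=\Lambda^{\hat\btheta}_{1,n}-\tfrac{K}{2}\log n+O(1)$. Since $\Lambda^{\hat\btheta}_{1,n}\approx n\KL$, crossing $\log\gamma$ requires $n\KL-\tfrac{K}{2}\log n\gtrsim\log\gamma$, and because $n\approx\log\gamma/\KL$ forces $\log n\approx\log\log\gamma$, the delay inflates to $\frac{\log\gamma}{\KL}+\frac{K\log\log\gamma}{2\KL}$. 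Multiplying through by $\KL$ recovers \eqref{eq: inf_LB}.

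I expect the main obstacle to be making the Laplace step rigorous and, crucially, \emph{uniform}: the expansion must hold simultaneously over the relevant range of window lengths and of fluctuations of the MLE, and the neglected terms (overshoot past the threshold and the tails of $\pi$) must be controlled into the $O(1)$ rather than contaminating the $\log\log\gamma$ term. A secondary technical point is handling Lorden's ``worst--worst-case'' form in \eqref{lorden}, i.e.\ the essential supremum over $\calF_{\nu-1}$ and the supremum over $\nu$; one reduces to $\nu=1$ by the standard observation that the pre-change history only aids the adversary, but verifying that this reduction is compatible with the regularity assumptions of \cite{SIEGMUND_2008} is exactly where the stated continuity hypotheses on $\bm\Theta$ are consumed.
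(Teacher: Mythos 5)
Your proposal matches the paper's treatment: the paper states this lemma as a direct import of \cite[Thm.~1]{SIEGMUND_2008}, with the continuity hypotheses on $\bm\Theta$ assumed precisely so that the cited theorem applies with $d=K$, and it offers no independent proof. Your additional Bayesian/Laplace reconstruction of the mechanism behind the $\tfrac{K}{2}\log\log\gamma$ term is sound as intuition but is not needed (and is not given) in the paper, since the lemma rests entirely on the citation.
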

\noindent We say that a change-detection test is second-order asymptotically minimax \cite{TARTAKOVSKY_BOOK} if it achieves the lower bound \eqref{eq: inf_LB} up to the $O(1)$ term.

A consequence of the double maximization in \eqref{eq:GLR_stat} is that the test statistic of the GLR test is not a proper likelihood ratio (crucially $\Ex_\infty(\sup_\btheta f(\bX_n, \btheta)/f_0(\bX_n)) \not = 1$). 
An alternative approach, which preserves the likelihood ratio property, is to evaluate the likelihood ratio of the $n$th sample with respect to an estimator $\bthetahat$ of $\btheta$ which does not depend on $\bX_n$. This idea was first proposed in the context of sequential hypothesis testing by Robbins and Siegmund \cite{ROBBINS_1972, ROBBINS_1974}, and adopted to change detection by Lorden and Pollak \cite{LORDEN_2005}. Concretely, the Shiryaev-Roberts-Robbins-Siegmund (SRRS) test $\TSRRS$ is given by
\begin{align}
    \widehat\Lambda_{t,n} &\define \sum_{m = t}^{n} \log \frac{f(\bX_m, \bthetahat_{t,m})}{f(\bX_m)}  \\
    &= \sum_{m = t}^n\left( {(\bthetahat_{t,m})}^\top \bX_m - \frac{1}{2} \norm{\bthetahat_{t,m}} ^2\right) \label{eq: SRRS_stat}\\
    \TSRRS &= \inf\left\{n : \sum_{t = 1}^n e^{\widehat\Lambda_{t, n}} > e^b\right\}.
\end{align}
where $\bthetahat_{t,m} = \bthetahat(\bX_t,\ldots,\bX_{m-1})$ is an estimate of $\btheta$ based on $\{\bX_t,\ldots,\bX_{m-1}\}$, and $\bthetahat_{t,t} = \bm 0$. As the estimator $\bthetahat$, Lorden and Pollak \cite{LORDEN_2005} consider method of moments and maximum likelihood estimators in exponential families, and show that the asymptotic performance of the SRRS test depends on the asymptotic efficiency of the used estimator. If $\bthetahat$ is the ML-estimator, i.e. $\bthetahat_{t,m} = \overbar{\bX}_{t,m-1}$, then as $\gamma \to \infty$ \cite{ROBBINS_1974, LORDEN_2005}
\begin{equation}
    \delay\TSRRS = \frac{\log\gamma}{\KL} + \frac{K\log\log\gamma}{2\KL} + O(1),
\end{equation}
which differs from the asymptotic ADD of the GLR test in \eqref{eq:GLR_delay} by at most a constant for all $\btheta.$ Therefore, the SRRS test with the ML-estimator is also second-order asymptotically minimax. In \cite{WANG_2015} it is shown that for large $K$ it is possible to improve the SRRS test for moderate ARL levels by replacing the MLE with a linear shrinkage estimator of the form $\bthetahat_{t,m} = a\overbar\bX_{t}^ {m-1}$, where $0<a<1$. However, the test obtained via linear shrinkage is not first-order asymptotically optimal, and the values of $a$ which yield an improvement depend on the unknown $\btheta$, which might make choosing $a$ difficult in practical use.

As in the GLR test, the number of computations required for updating the SRRS test statistic increases with $n$, which leads to a high computational burden if the test is run for a long time. As a solution, \cite{XIE_2023} suggests using a sliding window of size $w$ for parameter estimation, and employing a CuSum-like test given by
\begin{align}\label{eq:WL_stat} S_n &\define S_{n-1}^+ + \log \frac{f(\bX_n, \bthetahat_{n-w,n})}{f(\bX_n, \bm 0)} \\
\TWL &\define \inf\{n > w : S_n > b\}\label{eq:WL_stop}
\end{align} 
with $S_0 = 0$. Due to the simple recursive form for  $S_n$, the WL-CuSum test is easy to compute, especially if the estimator sequence $\{\bthetahat_{1,w+1}, \bthetahat_{2,w+2},\ldots\}$ also admits a recursion. The theory developed in \cite{XIE_2023} is generally agnostic to any specific estimator, only requiring that the estimator be consistent. It is shown that for large $\gamma$, if $w \to \infty$ such that $w = o(\log \gamma)$
\begin{equation}
    \delay\TWL \leq \frac{\log \gamma}{\KL} + \Theta\left(\sqrt{\log \gamma}\right),
\end{equation}
implying that the WL-CuSum test is asymptotically first-order optimal. However, the tradeoff for the reduced computation is a worse asymptotic second-order term in comparison to the SRRS and GLR procedures. 
\subsection{The James-Stein Estimator}\label{subsec:JS}

Let $\bX \sim \mathcal{N}(\btheta,  w^{-1}\bI)$, with $w$ known and $\btheta \in \mathbb R^K$ unknown. For example, if $\bX_1,...,\bX_{w}$ are independent samples from $\calN(\btheta, \bI)$, the sample mean $\overbar{\bX}_{1}^w$ is a sufficient statistic for estimating $\btheta$ and $\overbar{\bX}_{1}^w \sim \calN(\btheta,  w^{-1}\bI)$. A remarkable result due to James and Stein \cite{JAMES_STEIN_1961} is that when $K \geq 3$, the maximum likelihood estimator $\bthetahat^\textsc{ML}(\bX) = \bX$ is inadmissible for the estimation of $\btheta$ under mean squared-error loss. That is, there exists another estimator $\bthetahat$ such that $\MSE{\bthetahat} \leq \MSE\thetaML$ for all $\btheta$ where
\begin{equation}
    \MSE{\bthetahat} \define \Ex^\btheta\norm{\btheta - \bthetahat}^2,
\end{equation}
with the inequality being strict for at least one $\btheta$. James and Stein showed that the estimator
\begin{equation}\label{eq: JS_definition}
    \hat{\btheta}^{\text{JS}}(\bX) \define \left(1 - \frac{(K-2)}{w\lVert \bX-\bmu \rVert^2}\right)(\bX - \bmu) + \bmu 
\end{equation} 
for any $\bmu \in \mathbb{R}^K$ achieves strictly smaller MSE for all $\btheta$ than the maximum likelihood estimator, see Fig. \ref{fig:mseplot}. The mean squared-error of the James-Stein estimator is equal to (e.g. \cite[pp. 274]{LEHMANN_BOOK})
\begin{equation}\label{eq: JS_MSE}
    \MSE\thetaJS = K/w - \frac{(K-2)^2}{w^2}\Ex^\btheta\left(\frac{1}{\norm{\bX-\bmu}^2}\right),
\end{equation}
where the first term equals the MSE of the ML-estimator and the second term is always negative.
\begin{figure}
    \centering
    \includegraphics[]{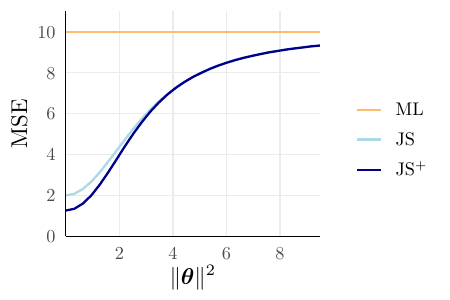}
    \caption{Mean squared-errors (MSE) of the maximum likelihood, James-Stein, and positive-part James-Stein estimators (with shrinkage towards $\bm 0$) when $K = 10$.}
    \label{fig:mseplot}
\end{figure}
Intuitively, the JS estimator shrinks each component of $\bX$ towards $\bmu$ by a factor that depends on $\norm{\bX-\bmu}$. As seen from \eqref{eq: JS_MSE}, the MSE of the JS estimator is the smallest when $\bmu$ is close to $\btheta$. While the JS estimator dominates the maximum likelihood estimator, it is itself inadmissible, as it can be uniformly improved by constraining the shrinkage factor to be non-negative. This modification results in the positive-part James-Stein estimator $\thetaJSplus$ \cite[pp. 357]{LEHMANN_BOOK}
\begin{equation}\label{eq:positive_JS}
\thetaJSplus(\bX) \define \left(1 - \frac{(K-2)}{w\lVert \bX-\bmu \rVert^2}\right)^+(\bX - \bmu) + \bmu,
\end{equation}
for which $\MSE\thetaJSplus < \MSE\thetaJS$ for all $\btheta$. It is noteworthy that even $\thetaJSplus$ is inadmissible, as it is not smooth enough to be an admissible estimator. However, as described in e.g. \cite{EFRON_MORRIS_1973}, substantial improvements over $\thetaJSplus$ do not seem to exist.

\begin{Remark}
The James-Stein estimator achieves the largest reduction in MSE when the chosen shrinkage direction $\bmu$ is close to the true parameter $\btheta$. If the two coincide, $\mathsf{MSE}_\bmu(\thetaJS) = 2/w$, i.e. the MSE is completely independent of the dimension $K$.
\end{Remark}
\begin{Remark}\label{remark:global_mean}
The shrinkage direction can be chosen adaptively based on the data. For example, if there is reason to expect that the components of $\btheta$ are likely to have similar values, a reasonable shrinkage direction might be the global mean $\mathbf m = (1/K)\bm{1}^\top\bX$. Then, the estimator 
\begin{equation}\label{eq:global_mean JS}
    \thetaJSm(\bX) = \left(1 - \frac{(K-3)}{w\lVert {\bX}-
    \mathbf{m}\rVert^2}\right)({\bX}-
    \mathbf{m}) + \mathbf{m}
\end{equation}
dominates the MLE if $K\geq 4$. Moreover, if $\btheta = \theta\mathbf{1}$ for any $\theta \in \mathbb R$, then $\mathbb{E}^\btheta\lVert \btheta - \thetaJSm\rVert^2 = 3/w$ \cite{LEHMANN_BOOK}. 
More generally, the shrinkage can be made toward any linear subspace $\mathbb{V} \subset \mathbb{R}^K$ \cite{SRINATH_2018, GEORGE_1986, FOURDRINIER_BOOK}. If $\projV(\bX)$ denotes the projection of $\bX$ onto $\mathbb{V}$, a James-Stein estimator shrinking toward $\mathbb{V}$ is defined as \cite[pp. 51]{FOURDRINIER_BOOK}
\begin{equation}
\begin{split}\label{eq: general_JS}
    &\bthetahat^{\textsc{JS}}_\mathbb{V}(\bX) \\
    &\define \left(1 - \frac{(K-d-2)}{w\lVert {\bX}-
    \projV(\bX)\rVert^2}\right)({\bX}-
    \projV(\bX)) + \projV(\bX),
\end{split}
\end{equation}
where $d$ is the dimension of $\mathbb{V}$. For instance, if one expects that $\btheta \approx \bm Z\bm\beta$ for some known full-rank matrix $\bm Z \in \mathbb{R}^{K\times d}$ and unknown $\bm\beta \in \mathbb{R}^d$, then $\mathbb{V} = \{\btheta : \btheta = \bm Z\bm\beta\}$ and $\projV(\bX) = \bZ(\bZ^\top\bZ)^{-1}\bZ^\top\bX.$ The estimator in \eqref{eq: general_JS} dominates the MLE as long as $d < K-2$, and its MSE is given by \cite[pp. 52]{FOURDRINIER_BOOK}
\begin{equation}\label{eq: JSV_mse}
    \MSE{\bthetahat^\textsc{JS}_\mathbb{V}} = K/w - (K-d-2)^2\Ex^\btheta\left[\frac{1}{\norm{\bX-\projV(\bX)}^2}\right].
\end{equation}
In the case that $\btheta \in \mathbb{V}$, the MSE reduces to \cite{GEORGE_1986, FOURDRINIER_BOOK}
\begin{equation}\label{eq: JSV_mse_small}
    \MSE{\bthetahat^\textsc{JS}_\mathbb{V}} = \frac{d+2}{w}.
\end{equation}
As with the regular James-Stein estimator, taking the positive part of the shrinkage factor in \eqref{eq: general_JS} yields an estimator with uniformly smaller MSE. The global mean James-Stein estimator in \eqref{eq:global_mean JS} is seen to be a special case of \eqref{eq: general_JS} if $\mathbb{V}$ is the subspace spanned by $\bm 1,$ the all-ones vector. Similarly, $\thetaJSV$ reduces to the ordinary James-Stein estimator \eqref{eq: JS_definition} if $\bbV$ consists of a single point $\bmu$. By choosing $\bbV$ it is possible to flexibly encode existing prior information related to $\btheta$ into choosing an effective shrinkage direction. Importantly, integrating such a prior hypothesis using James-Stein estimation can be done with no penalty in terms of MSE. Even if the hypothesis is entirely incorrect, the MSE is smaller than that attained through maximum likelihood estimation.

\end{Remark}

\section{Change detection using the James-Stein estimator}\label{sec: main}
On a high level, for all change detection procedures highlighted in Section \ref{sec: review}, the test statistic should remain small in the pre-change regime to maintain a satisfactory ARL, and grow quickly after the change for swift detection. With the SRRS and WL-CuSum tests, which require specifying an estimator $\bthetahat$, the desired pre-change behavior can be attained with any estimator, while the post-change behavior heavily depends on the chosen estimator. Consider, for example, the WL-CuSum statistic $S_n = S_{n-1}^+ + \log [f(\bX_n, \bthetahat_{n-w,n})/f(\bX_n, \bm 0)]$ in \eqref{eq:WL_stat} with window size $w$. Since the random variable $\bthetahat_{n-w, n}$ is $\mathcal{F}_{n-1}$-measurable

\begin{equation}
    \Ex_\infty\left[ \frac{f(\bX_n, \bthetahat_{n-w,n})}{f(\bX_n, \bm 0)}\given[\Big]\calF_{n-1}\right] = \int_{\mathbb R^K}f(\bX_n, \bthetahat_{n-w,n})d\bX_n = 1,
\end{equation}
and therefore by iterated expectation and Jensen's inequality the log-likelihood ratio increment of $S_n$ has negative expectation under $\P_\infty$ for any estimator $\bthetahat$. After the change, the drift of $S_n$ is given by
\begin{align}
    \IHAT &\define \Ex_1^\btheta\left[\log\frac{f(\bX_n, \bthetahat_{n-w,n})}{f(\bX_n, \bm 0)}\right] \\
    &= \Ex_1^\btheta\left(\Ex_1^\btheta\left[\log\frac{f(\bX_n, \bthetahat_{n-w,n})}{f(\bX_n, \bm 0)}\given[\Big]\calF_{n-1}\right]\right) \\
    &= \Ex_1^\btheta\left((\bthetahat_{n-w,n})^\top\btheta - \frac{1}{2}\norm{\bthetahat_{n-w,n}}^2\right) \\
    &= \frac{1}{2} \Ex_1^\btheta\left(\thetanorm - \norm{\btheta- \bthetahat_{n-w,n}}^2\right) \\
    &= \KL - \frac{1}{2}\MSE{\bthetahat_{1,w+1}}\label{eq:IHAT_expression},
\end{align}
where the second to last equality is found by completing the square. Note that $\IHAT$ is a function of the window size $w$, although this is suppressed from the notation. A key observation from \eqref{eq:IHAT_expression} is that the post-change drift depends  on $\bthetahat$ exclusively through the mean square error and is maximized when the MSE is small. As discussed in the previous section, the James-Stein estimator dominates the usual ML-estimator in the considered setting in terms of MSE. 
Therefore, we propose to extend the WL-CuSum and SRRS tests by incorporating the James-Stein estimator in order to obtain significant improvements in the detection performance, as will be seen in the following sections.

\subsection{The JS-WL-CuSum test}\label{subsec:WL_CuSUm}
In this subsection we analyze the JS-WL-CuSum test, i.e. the WL-CuSum test of \eqref{eq:WL_stat}-\eqref{eq:WL_stop} where the James-Stein estimator is used as the estimator of $\btheta$. First, we establish a non-asymptotic upper bound on the detection delay of the WL-CuSum test when a generic estimator $\bthetahat$ is used as the parameter estimator. In the considered Gaussian case, the test is given by 
 \begin{align}
     S_n &:= S_{n-1}^+ + \bthetahat_{n-w, n}^\top \bX_n - \frac{1}{2}\norm{\bthetahat_{n-w, n}}^2 \label{eq:WL_Gaus_def}\\
     \TWL &= \inf\left\{n : S_n > b\right\}.
 \end{align}
It is seen that the upper bound depends on the chosen estimator only via its mean square error.
\begin{Theorem}\label{thm:WL_UB}
    Suppose that the window size $w$ is chosen to be large enough such that $\IHAT = \KL - \frac{1}{2}\MSE{\bthetahat_{1,w+1}} > 0$, and $b = \log \gamma$. Then, $\Ex_\infty(\TWL) \geq \gamma$ and 
    \begin{equation}\label{eq: WL_UB}
        \delay{\TWL} \leq \dfrac{b+(w+1)\KL +2}{\KL - \frac{1}{2}\MSE{\bthetahat_{1,w+1}}}.
    \end{equation}
\end{Theorem}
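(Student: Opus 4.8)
The plan is to establish the two assertions separately: the false-alarm guarantee $\Ex_\infty(\TWL)\ge\gamma$ and the delay bound. Throughout I would write $Z_n\define\bthetahat_{n-w,n}^\top\bX_n-\tfrac12\norm{\bthetahat_{n-w,n}}^2=\log\frac{f(\bX_n,\bthetahat_{n-w,n})}{f(\bX_n,\bm 0)}$ for the log-likelihood increment, so that $S_n=S_{n-1}^++Z_n$, and unrolling the recursion gives the max-representation $S_n=\max_{1\le k\le n}\sum_{m=k}^n Z_m$, which both halves of the argument will exploit.

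For the ARL I would introduce the Shiryaev--Roberts dominating process $R_n\define\sum_{k=1}^n\exp\!\big(\sum_{m=k}^n Z_m\big)$, which satisfies $R_n=(1+R_{n-1})e^{Z_n}$ with $R_0=0$. Since $\bthetahat_{n-w,n}$ is $\calF_{n-1}$-measurable, the identity $\Ex_\infty[e^{Z_n}\mid\calF_{n-1}]=\int f(\bX_n,\bthetahat_{n-w,n})\,d\bX_n=1$ (already established in the excerpt) shows that $R_n-n$ is a zero-mean $\P_\infty$-martingale. Because $S_n=\max_k\sum_{m=k}^n Z_m\le\log\sum_k e^{\sum_{m=k}^n Z_m}=\log R_n$, crossing of $S$ forces crossing of $R$, so $\TWL\ge T_R\define\inf\{n:R_n>e^b\}$. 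Optional stopping on $R_n-n$ at $T_R\wedge m$, together with $R_{T_R}\ge e^b$ and $m\to\infty$, then yields $\Ex_\infty(\TWL)\ge\Ex_\infty(T_R)\ge e^b=\gamma$.

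For the delay the first step is to strip \eqref{lorden} of its dependence on $\nu$ and the pre-change history. Taking $k=\nu+w$ in the max-representation gives $S_n\ge\sum_{m=\nu+w}^n Z_m$ for $n\ge\nu+w$, so the test stops no later than the first passage of this restarted walk over $b$. For $m\ge\nu+w$ the window $\{\bX_{m-w},\dots,\bX_{m-1}\}$ and the sample $\bX_m$ are all post-change, hence i.i.d.\ $\calN(\btheta,\bI)$ and independent of $\calF_{\nu-1}$; the conditional law of the restarted walk is therefore identical for every $\nu$ and every history, reducing the problem to $\delay{\TWL}\le w+\Ex_1^\btheta[\sigma]$, where $\sigma$ counts the post-window steps to cross $b$. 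Each such increment has conditional mean $\Ex_1^\btheta[Z_m\mid\calF_{m-1}]=\KL-\tfrac12\norm{\btheta-\bthetahat_{m-w,m}}^2$ with unconditional value exactly the positive drift $\IHAT=\KL-\tfrac12\MSE{\bthetahat_{1,w+1}}$. Applying optional stopping to the martingale $\sum_m\!\big(Z_m-\Ex_1^\btheta[Z_m\mid\calF_{m-1}]\big)$ at $\sigma$ equates the expected accumulated drift to $\Ex_1^\btheta[\sum Z_m]$, which lies between $b$ and $b$ plus the expected overshoot; converting the accumulated conditional drift into $\IHAT\,\Ex_1^\btheta[\sigma]$ and bounding the boundary terms by the constant $\KL+2$ targets $\IHAT\,\Ex_1^\btheta[\sigma]\le b+\KL+2$, which together with $w\IHAT\le w\KL$ gives $\delay{\TWL}\le\big(b+(w+1)\KL+2\big)/\IHAT$.

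The main obstacle is exactly this last conversion. Although every post-window increment has unconditional mean $\IHAT>0$, consecutive windows overlap, so the $Z_m$ are dependent and the conditional drift $\KL-\tfrac12\norm{\btheta-\bthetahat_{m-w,m}}^2$ is random with no positive pointwise lower bound — a realization with a poor estimate produces locally negative drift. One therefore cannot invoke Wald's identity or a submartingale comparison directly; the delicate point is to show that the conditional drift accumulated up to the random time $\sigma$ still dominates $\IHAT\,\Ex_1^\btheta[\sigma]$ up to a controllable correction, and to bound both that correction and the expected overshoot by a constant using the finite second moment and Gaussian structure of the increments. I expect this control of the dependent, randomly-drifting walk (and the attendant verification of the optional-stopping integrability conditions) to be the crux of the argument, while the ARL step and the reduction to the restarted walk are comparatively routine.
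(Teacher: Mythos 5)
Your ARL argument is correct: dominating the CuSum statistic by the logarithm of the Shiryaev--Roberts sum $R_n$ and applying optional stopping to the $\P_\infty$-martingale $R_n - n$ is essentially how \cite[Lemma 3]{XIE_2023} (which the paper simply cites for this part) is proved. Your reduction of the worst-case delay to a restarted walk whose windows contain only post-change samples is also sound, and parallels the paper's device of the non-restarting statistic $U_n$ with worst case at $\nu = 1$. Moreover, your decomposition via the centered martingale $\sum_m \bigl( Z_m - \Extheta[Z_m \mid \calF_{m-1}]\bigr)$ is, after rearrangement, the same identity the paper obtains by applying Wald's identity to the true i.i.d.\ log-likelihood ratio $\Lambda^\btheta_{1,n}$ and splitting $\Extheta(\Lambda^\btheta_{1,T'}) = \Extheta(U_{T'}) + \Extheta(\Lambda^\btheta_{1,T'} - U_{T'})$.

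However, the step you yourself flag as the crux is a genuine gap, and you leave it open. Converting the accumulated conditional drift into $\IHAT\,\Extheta(\sigma)$ amounts to showing
\begin{equation*}
\Extheta\Bigl(\sum_{n \leq \sigma} \norm{\bthetahat_{n-w,n} - \btheta}^2\Bigr) \;\leq\; \MSE{\bthetahat_{1,w+1}}\,\Extheta(\sigma) + O(w),
\end{equation*}
and this does \emph{not} follow from optional stopping or any martingale argument alone: both $\bthetahat_{n-w,n}$ and the event $\{\sigma \geq n\}$ are $\calF_{n-1}$-measurable, so they are correlated and the expectation of their product does not factorize. The paper's resolution is a decoupling trick exploiting the $w$-dependence of the estimator sequence: because the summands are nonnegative, enlarge the sum from $\sigma$ to $\sigma + w$ and write it as $\sum_n \norm{\bthetahat_{n-w,n} - \btheta}^2 \mathbf{1}_{\{\sigma \geq n - w\}}$; the indicator $\mathbf{1}_{\{\sigma \geq n-w\}}$ is $\calF_{n-w-1}$-measurable, while $\bthetahat_{n-w,n}$ depends only on $\bX_{n-w}, \ldots, \bX_{n-1}$ and is therefore independent of $\calF_{n-w-1}$, so the expectation factorizes term by term and sums to $\MSE{\bthetahat_{1,w+1}}\,\Extheta(\sigma)$. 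Without this (or an equivalent) device, your "control of the dependent, randomly-drifting walk" remains a hope rather than a proof. A second, smaller gap: you assert the overshoot bound $\KL + 2$ without derivation; in the paper this is a nontrivial computation (conditional Gaussianity of the increment given $\bthetahat$, the Mills-ratio bound, Cauchy--Schwarz, and a maximization over $\norm{\bthetahat}$), so it must be supplied rather than assumed.
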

\begin{proof}
    The ARL lower bound follows directly from \cite[Lemma 3]{XIE_2023} where it is proven that $\Ex_\infty(\TWL) \geq e^b$ for any estimator $\bthetahat$ and threshold $b$. Therefore, choosing $b = \log \gamma$ is sufficient to guarantee $\Ex_\infty(\TWL)\geq b$. 
    
    For the upper bound \eqref{eq: WL_UB}, a sketch of the proof is as follows. First, we define an alternate statistic $U_n$ recursively by
    \begin{equation}
        U_n = U_{n-1} + \bthetahat_{n-w, n}^\top \bX_n - \frac{1}{2}\norm{\bthetahat_{n-w, n}}^2, \quad U_0,...,U_w =0,
    \end{equation}
    and the associated stopping time $T'_b = \inf\{n : U_n > b\}$, for which it can be shown that $\delay\TWL \leq \Extheta(T'_b)$. We then apply Wald's identity to the log-likelihood ratio process $\Lambda_{1,n}^\btheta$ defined in \eqref{eq: LLR_lambda_def} to obtain 
    \begin{align}
        \Extheta(T'_b)\KL &= \Extheta(\Lambda_{1,T'_b}) \\
        &=\Extheta(\Lambda_{1,T'_b}-U_{T_b'}) + \Extheta(U_{T'_b})\label{eq: sketch_wald}.
    \end{align}
    The result is obtained by finding appropriate upper bounds for the two terms in \eqref{eq: sketch_wald}. Complete details are given in Appendix~\ref{proof:WL_UB}.
\end{proof}
We note that an upper bound for the detection delay with general (not only Gaussian) pre- and post-change distributions has been derived previously in \cite{XIE_2023}. The bound derived above in Theorem~\ref{thm:WL_UB} is tighter than the general bound presented in \cite{XIE_2023}, as it is possible to control the overshoot $\Ex_1^\btheta(U_\TWL-b)$ by a small constant independent of $b$ in the Gaussian case. 

\begin{Remark}
    Due to the test statistic \eqref{eq:WL_Gaus_def} resetting at 0 and the dependent nature of the estimator sequence $\bthetahat_{1,w+1}, \bthetahat_{2,w+2},\ldots$, obtaining an exact non-asymptotic expression for the detection delay is difficult. In the proof of Theorem~\ref{thm:WL_UB}, we analyze a slightly modified stopping time $T'_b~=~\inf\{n : U_n > b\}$ where 
        $U_n = U_{n-1} + \bthetahat_{n-w, n}^\top \bX_n - \frac{1}{2}\norm{\bthetahat_{n-w, n}}^2, n > w,$
 for which it can be shown that $\delay\TWL\leq \Extheta(T'_b)$. The alternative statistic $U_n$ may be seen to correspond to the actual statistic $S_n$, with the distinction that $U_n$ does not restart at $0$. If the parameter estimates $\bthetahat_{1,w+1}, \bthetahat_{2,w+2},\ldots$ were independent, the increments of $U_n$ would constitute an i.i.d. sequence for $n > w$ with mean $\IHAT$. Then, assuming $\IHAT > 0$, Wald's equation \cite[Corollary 2.3.1]{TARTAKOVSKY_BOOK} could be directly applied to obtain
 \begin{equation}\label{eq:wald_approx}
     \Extheta(T'_b) = w+\frac{\Extheta(U_{T' _b})}{\IHAT} = w+\frac{b + \Extheta(U_{T'_b}-b)}{\IHAT},
 \end{equation}
 where the overshoot $\Extheta(U_{T'_b}-b)$ is generally small compared to $w$ or $b$. For instance, the proof of Theorem~\ref{thm:WL_UB} we show that $\Extheta(U_{T'_b}-b) \leq \KL + 2$. Although equation \eqref{eq:wald_approx} strictly applies only when $w=1$, (i.e. when the estimator sequence is in fact independent), we suggest employing it as a straightforward approximation for the detection delay of WL-CuSum across different values of $w$. Ignoring the overshoot, we obtain from \eqref{eq:wald_approx}
 \begin{equation}\label{eq:simple_approx}
     \delay\TWL \approx w + \frac{b}{\KL - \frac{1}{2}\MSE{\bthetahat_{1,w+1}}}.
 \end{equation}
 In Section~\ref{sec:sim}, Figure~\ref{fig:bound_eval} we demonstrate numerically that this approximation is accurate for practically important values of $w$.
\end{Remark}

The key takeaway from Theorem~\ref{thm:WL_UB} is that the estimator $\bthetahat$ affects the bound only through the mean square error. The same is true also for the simple approximation \eqref{eq:simple_approx}. As a result, due to its strong MSE properties, the James-Stein estimator is a natural choice as the estimator in the test. For instance, since the JS estimator dominates the maximum likelihood estimator in the MSE sense for all values of $\btheta$ and $w$ when $K\geq 3$, our results indicate that the JS estimator is a uniform improvement over the ML estimator in the WL-CuSum test in the considered settings. It should be mentioned that our results are based on upper bounds and approximations rather than exact expressions. Nonetheless, the results provide a robust indication of the JS estimator’s superior performance in this setting, which is confirmed in the simulations in Section~\ref{sec:sim}.

The upper bound in Theorem~\ref{thm:WL_UB} requires that $\IHAT > 0$, i.e. $w$ must be chosen such that $\MSE{\btheta_{1,w+1}} \leq 2\KL = \norm{\btheta}^2$. For example, for the JS estimator $\MSE{\thetaJS_{1,w+1}} < \frac{K}{w}$ \eqref{eq: JS_MSE}, and therefore choosing $w \geq K/\norm{\btheta}^2$ would ensure that $\IHAT > 0$. In particular, it is observed that the smaller the change, the larger the window must be. Of course, choosing $w \geq K\norm{\btheta}^2$ requires knowing at least some lower bound on $\norm{\btheta}^2$ which in general is unknown. Moreover, one can insert the MSE of the JS estimator (which depends on $w$ and $\btheta$) from \eqref{eq: JS_MSE} or \eqref{eq: JSV_mse} into the upper bound and solve for the value of $w$ which minimizes the upper bound. Again, the minimizer depends on $\btheta$, which is unknown. Therefore, in practice it is recommended to select a range of window values $w \in [1,W]$ for some large $W$, and run the WL-CuSum test simultaneously for each value, as proposed in \cite{XIE_2023}. For example, suppose there exists some value $\vartheta$ such that all changes for which $\norm{\btheta}^2 < \vartheta$ are not significant enough to warrant detecting. Then, choosing $W > K/\vartheta$ is sufficient for ensuring there exists a test with $\IHAT > 0$ for all $\btheta$ such that $\norm\btheta^2 \geq \vartheta$. 

As discussed in Section \ref{subsec:JS}, there exists an additional degree of freedom in the James-Stein estimator in choosing the shrinkage target. The target can be a single point $\bmu \in \bbR^K$ \eqref{eq: JS_definition} or more generally a subspace $\bbV \subset \bbR^K$ \eqref{eq: general_JS}. The closer the true $\bmu$ is to the target, the larger the reduction in MSE \eqref{eq: JSV_mse}. Naturally, if specific prior information regarding $\btheta$ exists, this information can be used in choosing the shrinkage target. An example is presented later in Section \ref{subsec:spat_example}. In the absence of further information regarding $\btheta$, two effective shrinkage targets are the pre-change mean $\bf 0$ or the "global-mean" \eqref{eq:global_mean JS}. When shrinking toward the pre-change mean, the JS estimator is most effective when $\norm\btheta^2 \propto \KL$ is small (see Fig. \ref{fig:mseplot} and \eqref{eq: JS_MSE}), i.e. when the change is small and the detection task is the most difficult. When $\norm{\btheta}^2$ grows, the gain in MSE diminishes, but also the change is easier to detect. Similar reasoning applies for the estimator $\thetaJSm$ \eqref{eq:global_mean JS} shrinking toward the global mean: if the components of $\btheta$ are not similar to each other, $\KL$ cannot be very small. Moreover, since $\thetaJSm$ is effective on a larger set of post-change parameter values than the zero-shrinking variant, we suggest using the global mean of the window as the default shrinkage target. Therefore, the proposed estimator to be inserted into \eqref{eq:WL_Gaus_def} is given by
\begin{equation}
    \bthetahat_{n-w, n} = \left(1 - \frac{(K-3)}{w\lVert {\barX_{n-w}^{n-1}}-
    \mathbf{m}\rVert^2}\right)^+({\barX_{n-w}^{n-1}}-
    \mathbf{m}) + \mathbf{m},
\end{equation}
where $\barX_{n-w}^{n-1}$ is the component-wise mean vector of $\bX_{n-w},\ldots,\bX_{n-1}$ and $\mathbf{m} = (1/K)\mathbf{1}^\top\barX_{n-w}^{n-1}$ is its empirical mean.

\color{black}
\subsection{The JS-SRRS test}
In the previous section, we showed that the James-Stein estimator can be employed to obtain a uniform non-asymptotic improvement on the MLE when used with the finite-window WL-CuSum test. Next, we will show that when used in conjunction with the infinite-window SRRS test, a James-Stein-based test is asymptotically superior to the GLR-CuSum test \eqref{eq:GLR_stat} in a well-defined sense. 

The SRRS test $\TSRRS$ with a general estimator $\bthetahat$ is given by \cite{LORDEN_2005}
\begin{align}
    \widehat\Lambda_{t,n} &\define   \sum_{m = t}^n\left( {(\bthetahat_{t,m})}^\top \bX_m - \frac{1}{2} \norm{\bthetahat_{t,m}} ^2\right) \label{eq: SRRS_estimator}\\
    \TSRRS &\define \inf\left\{n : \sum_{t = 1}^n e^{\widehat\Lambda_{t, n}} > e^b\right\}.
\end{align}
The theorem below relates the MSE of the used estimator to the asymptotic detection delay of the SRRS test.
\begin{Theorem}\label{theorem:SRRS}
    Let the mean square error of the estimator $\bthetahat$ satisfy
    \begin{equation}\label{eq:MSE_condition}
        \MSE{\bthetahat_{1, n+1}} \leq \frac{\kappa(\btheta)}{n}, \quad n \geq 1.
    \end{equation}
    for some value $\kappa(\btheta)$ which may depend on $\btheta$.
    Then, for $b =  \log \gamma$ the SRRS test satisfies $\Ex_\infty(\TSRRS) \geq \gamma$ and
    \begin{equation}
        \delay\TSRRS \leq \frac{\log \gamma}{\KL} + \frac{\kappa(\btheta)\log\log\gamma}{2\KL} + O(1), \quad \gamma \to \infty.
    \end{equation}
\end{Theorem}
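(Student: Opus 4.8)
The plan is to prove the two assertions separately: the average run length (ARL) lower bound, which follows from a martingale property of the Shiryaev--Roberts statistic, and the detection-delay upper bound, which is a renewal-theoretic argument resting on the drift identity \eqref{eq:IHAT_expression}. The overall structure mirrors the analysis of Lorden and Pollak \cite{LORDEN_2005} and Robbins and Siegmund \cite{ROBBINS_1974}, specialized to the Gaussian model so as to extract the explicit dependence on $\kappa(\btheta)$.

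For the ARL bound I would write $R_n \define \sum_{t=1}^n e^{\widehat\Lambda_{t,n}}$ and exploit the recursion produced by $\bthetahat_{t,t} = \bm 0$: the freshly started term at time $n$ equals $1$, while each older term is multiplied by $f(\bX_n,\bthetahat_{t,n})/f(\bX_n)$, whose conditional $\P_\infty$-expectation is $1$ since $\bthetahat_{t,n}$ is $\calF_{n-1}$-measurable. Hence $\Ex_\infty[R_n\mid\calF_{n-1}] = R_{n-1}+1$, so $R_n - n$ is a zero-mean $\P_\infty$-martingale started at $0$. Applying optional sampling to $R_{n\wedge\TSRRS}-(n\wedge\TSRRS)$, letting the truncation tend to infinity, and using Fatou together with $R_{\TSRRS} > e^b$ at the stopping instant yields $\Ex_\infty(\TSRRS) \geq \Ex_\infty(R_{\TSRRS}) \geq e^b = \gamma$.

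For the delay I would first reduce to a single-term, change-at-one problem. Because $R_n \geq e^{\widehat\Lambda_{\nu,n}}$ for $n\geq\nu$, the test stops no later than $\inf\{n\geq\nu:\widehat\Lambda_{\nu,n}>b\}$, and since $\widehat\Lambda_{\nu,\cdot}$ depends only on the post-change i.i.d. observations $\bX_\nu,\bX_{\nu+1},\dots$, its conditional law given $\calF_{\nu-1}$ is free of the past. Thus $\delay\TSRRS \leq \Extheta(\tau_b)$ with $\tau_b \define \inf\{n:\widehat\Lambda_{1,n}>b\}$, and it suffices to bound $\Extheta(\tau_b)$. Completing the square as in \eqref{eq:IHAT_expression} (writing $\bX_m = \btheta+\bZ_m$) decomposes the statistic as $\widehat\Lambda_{1,n} = n\KL - \tfrac12 V_n + M_n$, where $V_n \define \sum_{m=1}^n \norm{\btheta-\bthetahat_{1,m}}^2$ and $M_n \define \sum_{m=1}^n \bthetahat_{1,m}^\top\bZ_m$ is a $\P_1^\btheta$-martingale. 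Here the second-order term becomes visible: $\Extheta(V_n) = \sum_{m=1}^n \MSE{\bthetahat_{1,m}}$, which by the hypothesis \eqref{eq:MSE_condition} (and $\bthetahat_{1,1}=\bm 0$) is at most $2\KL + \kappa(\btheta)\sum_{j=1}^{n-1} j^{-1} = \kappa(\btheta)\log n + O(1)$. Taking $\P_1^\btheta$-expectations at $\tau_b$ gives $\Extheta(\widehat\Lambda_{1,\tau_b}) = \KL\,\Extheta(\tau_b) - \tfrac12\Extheta(V_{\tau_b}) + \Extheta(M_{\tau_b})$; the martingale term vanishes by optional stopping and $\widehat\Lambda_{1,\tau_b} = b + (\text{overshoot})$, so $\KL\,\Extheta(\tau_b) = b + \Extheta(\text{overshoot}) + \tfrac12\Extheta(V_{\tau_b})$. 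Dividing by $\KL$ and using $\tau_b \approx b/\KL$, so that $\log\tau_b \approx \log\log\gamma$, produces the claimed $\log\gamma/\KL + \kappa(\btheta)\log\log\gamma/(2\KL) + O(1)$.

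The main obstacle is making the final step rigorous: $\tau_b$ is random and correlated with the estimation errors, so replacing $\Extheta(V_{\tau_b})$ by $\kappa(\btheta)\log\Extheta(\tau_b)$ is not immediate. I would address this by establishing exponential concentration of $\tau_b$ about $b/\KL$ — controlling the lower-order martingale fluctuation $M_n = O_P(\sqrt n)$ and the overshoot through bounded-increment and nonlinear-renewal arguments in the style of \cite{LORDEN_2005,ROBBINS_1974} — and then splitting $\Extheta(V_{\tau_b})$ according to whether $\tau_b$ exceeds a fixed multiple of $b/\KL$. On the dominant event the monotone bound $V_{\tau_b} \leq \kappa(\btheta)\log\tau_b + O(1)$ applies, while the light tail of $\tau_b$ renders the complementary contribution $O(1)$. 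Verifying the integrability needed for the optional-stopping step and that the overshoot is uniformly $O(1)$ in $\gamma$ constitutes the remaining technical work.
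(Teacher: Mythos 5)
Your proposal is correct and follows essentially the same route as the paper's own proof: the ARL bound via the $\P_\infty$-martingale $R_n - n$ and optional sampling, and the delay bound by writing $\KL$ times the expected stopping time as $b + O(1)$ (overshoot) plus one half of the expected cumulative squared estimation error, which is then controlled by concentration of the stopping time around $b/\KL$ together with monotonicity of the cumulative error — exactly the paper's combination of the first-order result of Lorden--Pollak with a Robbins--Siegmund-type tail bound, with your one-term reduction and martingale decomposition being only cosmetic reorganizations of the paper's worst-case-at-$\nu=1$ claim and Wald-identity step. One caution: your phrase ``the monotone bound $V_{\tau_b} \leq \kappa(\btheta)\log\tau_b + O(1)$'' is not a valid pathwise inequality (the hypothesis \eqref{eq:MSE_condition} controls expectations only); the correct form of that step, which your surrounding plan in fact describes, is $V_{\tau_b}\mathbf{1}_{\{\tau_b\leq\eta\}} \leq V_{\eta}$ for a deterministic $\eta \propto b/\KL$, followed by taking expectations.
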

\begin{proof}
The proof is given in Appendix \ref{proof:SRRS_theorem}.
\end{proof}

In our proposed JS-SRRS test the estimator $\bthetahat_{t,n}$ in \eqref{eq: SRRS_estimator} has the general form
\begin{equation}\label{eq: SRRS_JS_estimator}
    \bthetahat_{t,n} = \left(1 - \frac{(n-t-1)^{-1}(K-d-2)}{\lVert {\overbar\bX_t^{n-1}}-
    \projV(\overbar\bX_t^{n-1})\rVert^2}\right)^+({\overbar\bX_t^{n-1}}-
    \projV(\overbar\bX_t^{n-1})) + \projV(\overbar\bX_t^{n-1}),
\end{equation}
where $\bbV$ is a target subspace of $\mathbb R^K$ with dimension less than $K-2$ and $\projV(\bX)$ denotes the projection of $\bX$ onto $\bbV$, see Remark \ref{remark:global_mean}.
\begin{Corollary}\label{cor: SRRS_general}
 Let $\bbV$ be a subspace of $\mathbb{R}^K$ with $d\define \dim{\bbV} < K-2$. If $\bthetahat$ is chosen as in \eqref{eq: SRRS_JS_estimator}, then as $\gamma \to \infty$   
  \begin{equation}\label{eq: JS_SRRS_delay}
        \delay\TSRRS=\begin{cases}
            \KL^{-1}\left(\log \gamma + (d+2)/2\log \log \gamma\right) + O(1) \quad &\emph{if } \btheta \in \bbV\\
             \KL^{-1}\left(\log \gamma + (K/2)\log \log \gamma \right) + O(1) &\emph{otherwise.} \\
        \end{cases} 
    \end{equation}
    Therefore, 
    \begin{equation}
       \sup_{\btheta}\KL \delay\TSRRS - \inf_{T \in \calC_\gamma}\sup_{\btheta}\KL \delay T = O(1)
    \end{equation}
    meaning that the JS-SRRS test is second-order asymptotically minimax for any shrinkage target $\bbV$ as $\gamma \to \infty$.
\end{Corollary}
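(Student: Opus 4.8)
The plan is to obtain both branches of \eqref{eq: JS_SRRS_delay} as direct consequences of Theorem~\ref{theorem:SRRS}, so that the only substantive computation is the constant $\kappa(\btheta)$ appearing in the MSE hypothesis \eqref{eq:MSE_condition}; the two regimes of the corollary correspond exactly to the two regimes of the James--Stein MSE. First I would note that $\bthetahat_{1,n+1}$ in \eqref{eq: SRRS_JS_estimator} is the positive-part subspace James--Stein estimator \eqref{eq: general_JS} applied to $\overbar{\bX}_1^{n} \sim \calN(\btheta, n^{-1}\bI)$, and that taking the positive part only lowers the MSE, so it suffices to bound the MSE of the plain estimator. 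For $\btheta \in \bbV$, \eqref{eq: JSV_mse_small} gives $\MSE{\bthetahat_{1,n+1}} = (d+2)/n$ up to an $O(n^{-2})$ correction coming from the mismatch between the nominal shrinkage factor $(n-1)^{-1}$ and the true precision $n$, so $\kappa(\btheta) = d+2$. For $\btheta \notin \bbV$, the negative correction term in the James--Stein MSE (cf.\ \eqref{eq: JS_MSE}, \eqref{eq: JSV_mse}) is of order $n^{-2}$, because $\norm{\overbar{\bX}_1^{n} - \projV(\overbar{\bX}_1^{n})}^2 \to \norm{\btheta - \projV(\btheta)}^2 > 0$; hence $n\,\MSE{\bthetahat_{1,n+1}} \to K$ and the sharp constant is $\kappa(\btheta) = K$.

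Substituting these constants into Theorem~\ref{theorem:SRRS} yields the upper bounds in \eqref{eq: JS_SRRS_delay}. A technical point is that \eqref{eq:MSE_condition} is stated for every $n \geq 1$, whereas for $\btheta$ far from $\bbV$ the MSE at the smallest sample sizes (where the positive part collapses the estimator onto $\projV$) can momentarily exceed $K/n$; I would absorb these finitely many terms into the $O(1)$, using that the second-order coefficient is governed only by the MSE at sample sizes growing with $\gamma$. Second-order minimaxity is then immediate: since $d < K - 2$, the largest second-order coefficient over $\btheta$ equals $K$ and is attained off $\bbV$, so $\sup_\btheta \KL\delay\TSRRS \leq \log\gamma + \tfrac{K}{2}\log\log\gamma + O(1)$, while Lemma~\ref{lemma: inf_LB} supplies the reverse bound $\inf_{T \in \calC_\gamma}\sup_\btheta \KL\delay T \geq \log\gamma + \tfrac{K}{2}\log\log\gamma + O(1)$; subtracting gives the claimed $O(1)$ gap. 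This step also forces me to check that the $O(1)$ in Theorem~\ref{theorem:SRRS} is uniform in $\btheta$, which I would verify by tracking its $\btheta$-dependence through that proof.

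To upgrade the upper bounds of \eqref{eq: JS_SRRS_delay} to equalities I would supply matching lower bounds. The first-order term $\log\gamma/\KL$ is universal over $\calC_\gamma$, being the optimal CuSum delay \eqref{eq: CuSum delay}, so it is exact in both cases. For $\btheta \notin \bbV$ the shrinkage factor in \eqref{eq: SRRS_JS_estimator} tends to $1$ and the increments of $\widehat\Lambda_{t,n}$ differ from their maximum-likelihood counterparts by quantities whose expectations are summable in the sample size; hence the JS-SRRS and ML-SRRS delays agree up to $O(1)$, and the coefficient $K/2$ is inherited from the known ML-SRRS delay recalled in Section~\ref{sec: review}. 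The \emph{main obstacle} is the case $\btheta \in \bbV$: there the coefficient $(d+2)/2$ is strictly below the minimax value $K/2$, so it is a genuine superefficiency phenomenon that Lemma~\ref{lemma: inf_LB} cannot certify, and showing that the delay is not even smaller requires a direct renewal-theoretic lower bound exploiting the exact MSE \eqref{eq: JSV_mse_small} rather than any generic argument.
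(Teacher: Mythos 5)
Your proposal is correct and, at its core, identical to the paper's proof, which consists of just three sentences: read $\kappa(\btheta)$ off the James--Stein risk formulas \eqref{eq: JSV_mse} and \eqref{eq: JSV_mse_small} ($\kappa(\btheta)=d+2$ on $\bbV$, $\kappa(\btheta)=K$ off it --- the paper actually writes $\kappa(\btheta)=d$ in the subspace case, a slip that your computation corrects), apply Theorem~\ref{theorem:SRRS}, and invoke Lemma~\ref{lemma: inf_LB} for second-order minimaxity. The refinements you add are not invented difficulties but repairs of real looseness that the paper passes over silently: the positive part and the mismatch between the nominal factor $(n-t-1)^{-1}$ and the true precision; the fact that for $\btheta$ far from $\bbV$ the smallest-sample estimates violate \eqref{eq:MSE_condition} with $\kappa=K$ (the one-observation estimator collapses to $\projV$, whose risk $d+\norm{\btheta-\projV(\btheta)}^2$ may exceed $K$), so that finitely many terms must indeed be absorbed into the $O(1)$; and the requirement that the $O(1)$ in Theorem~\ref{theorem:SRRS} be uniform in $\btheta$ before one may take $\sup_\btheta$. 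Finally, the ``main obstacle'' you flag --- a matching lower bound certifying that the second-order term is not smaller than $\frac{d+2}{2\KL}\log\log\gamma$ when $\btheta\in\bbV$ --- is not closed by the paper either: Theorem~\ref{theorem:SRRS} is a pure upper bound, and the paper's proof simply restates that bound as the equality \eqref{eq: JS_SRRS_delay}. Since the minimaxity conclusion needs only the upper bounds together with Lemma~\ref{lemma: inf_LB}, your argument establishes everything the paper's proof actually establishes; the unproved lower-bound direction is a looseness in the statement's ``$=$'' sign shared by the paper, not a defect peculiar to your proposal.
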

\begin{proof}
    By \eqref{eq: JSV_mse} and \eqref{eq: JSV_mse_small}, the estimator in \eqref{eq: SRRS_JS_estimator} achieves \eqref{eq:MSE_condition} with $\kappa(\btheta) = d$ if $\btheta \in \bbV$ and $\kappa(\btheta) = K$ otherwise. The expression for the detection delay then follows from Theorem~\ref{theorem:SRRS}. Second-order asymptotic minimaxity stems from observing that \eqref{eq: JS_SRRS_delay} matches the lower bound in Lemma~\ref{lemma: inf_LB} up to $O(1)$.
\end{proof}

Comparing the result in Corollary~\ref{cor: SRRS_general} with the asymptotic detection delay of the GLR test \eqref{eq:GLR_delay} shows that, up to constant terms, the JS-SRRS test $\TSRRS$ is never worse than the GLR test $\TGLR$, and it is strictly better for some values of $\btheta$. In particular, if $\btheta$ lies in the target subspace $\bbV$, then $\delay\TGLR - \delay\TSRRS = O((K-d)/2\log \log \gamma) \to \infty$ as $\gamma \to \infty$, whereas for other $\btheta$, the first two asymptotic terms match. Especially for large $K$, the improvement in the second-order term can be significant for practical finite values of $\gamma$ as will be demonstrated in Section~\ref{sec:sim}. Corollary \ref{cor: SRRS_general} also shows that the James-Stein estimator presents an improvement over the linear shrinkage estimators considered in \cite{WANG_2015}, as it can improve the second-order term without sacrificing first-order optimality for any $\btheta$. 
\begin{Remark}
    It is interesting to note that the asymptotic properties of the JS-SRRS test stem from the fact that the James-Stein estimator is a \emph{superefficient} estimator \cite{VANDERVAART}. Under squared error loss, an estimator $\bthetahat$ is said to be superefficient if for all $\btheta$ \cite[pp. 440]{LEHMANN_BOOK}
    \begin{equation}
        \lim_{n\to\infty} n\MSE{\bthetahat_{1,n+1}} \leq \mathsf{Tr}\left(\mathsf{FI}^{-1}(\btheta)\right) = K,
    \end{equation}
    where $\mathsf{FI}(\btheta)$ denotes the Fisher information matrix of a single observation, and the inequality is strict for at least one $\btheta$. Points where the inequality is strict are called points of superefficiency, which for the ordinary JS estimator occur at $\btheta = \bmu$, and for the subspace-shrinking JS at $\btheta \in \bbV$. Observe, that the definition of superefficiency is equivalent to an estimator satisfying the MSE condition of Theorem \ref{theorem:SRRS} \eqref{eq:MSE_condition} with $\kappa(\btheta) \leq K$. As the set of points of superefficiency cannot have Lebesgue measure greater than zero \cite{VANDERVAART,LECAM}, Theorem \ref{theorem:SRRS} implies that the SRRS test with any estimator cannot improve on the second-order detection delay term except on a set of zero measure. The JS-SRRS test achieves the improvement in the $d$-dimensional subspace $\bbV$. 
\end{Remark}

\section{Simulations} \label{sec:sim}
In this section, we evaluate the proposed tests and validate the analytical results in simulations. In all experiments, the global mean positive-part James-Stein estimator 
\begin{equation}\label{eq:sim_global_JS}
    \thetaJSmplus(\bX) \define \left(1 - \frac{(K-3)}{w\lVert {\bX}-
    \mathbf{m}\rVert^2}\right)^+({\bX}-
    \mathbf{m}) + \mathbf{m}
\end{equation}
where $\mathbf{m} = m\mathbf{1}$ and $m$ is the empirical mean of the components of $\bX$, is employed as the specific variant of the JS estimator incorporated into the proposed tests. As mentioned in Remark~\ref{remark:global_mean}, this estimator is a special case of the general subspace-shrinking James-Stein estimator with $\bbV = \text{span}(\bm 1)$. As recommended in \cite{XIE_2023}, we run the parallel variant of the WL-CuSum test in place of the fixed window WL-CuSum test. The parallel WL-CuSum test is defined by simultaneously running $W$ WL-CuSum tests with window sizes ranging from 1 to $W$ and stopping when the first individual test detects a change. That is, if $\TWL(w)$ denotes the WL-CuSum test of \eqref{eq:WL_stop} with window size $w$, the parallel WL-CuSum stopping time is given by $\min_{1\leq w \leq W}\TWL(w)$. The maximal window size is chosen as $W = 200$ for all experiments. Supported by the results of Section~\ref{subsec:WL_CuSUm} JS estimator reduces the detection delay of $\TWL(w)$ for all $w$, and therefore we expect it to reduce the detection delay of the parallel test as well. The proposed JS-WL-CuSum and JS-SRRS tests are compared against their maximum-likelihood counterparts (ML-WL-CuSum and ML-SRRS), as well as the GLR test \eqref{eq:GLR_stat}. The GLR test is computationally expensive, and therefore we apply a window-limited version \cite{LAI_1998} with a window size of 200. In our experiments, the system parameters are chosen such that expected detection delay $<< 200$ to ensure that the windowing does not meaningfully affect detection performance. 

{\color{black}
\subsection{Evaluation of the upper bound}
First, the accuracies of the upper bound derived in Theorem~\ref{thm:WL_UB} and the approximation proposed in \eqref{eq:simple_approx} for the WL-CuSum test are evaluated. Since the analytical results are for the ordinary WL-CuSum with a single window, not the parallel variant, in Figure~\ref{fig:bound_eval} the tests are also run with a single window. The true $\btheta$ is chosen as $[K^{-1/2},\ldots,K^{-1/2}]$ for $K = 10$ (left) and $K =50$ (right). For both tests, the window sizes that minimize the approximation in \eqref{eq:simple_approx} at $b=10$ are chosen. The observed detection delays for a range of values of $b$ (solid lines) are compared with the derived upper bounds (dotted lines) and the proposed approximations (dot-dash lines). The bounds are observed to be reasonably tight, especially with the JS estimator. Moreover, the approximations accurately match the true observed detection delays.

\begin{figure}
\centering
\includegraphics{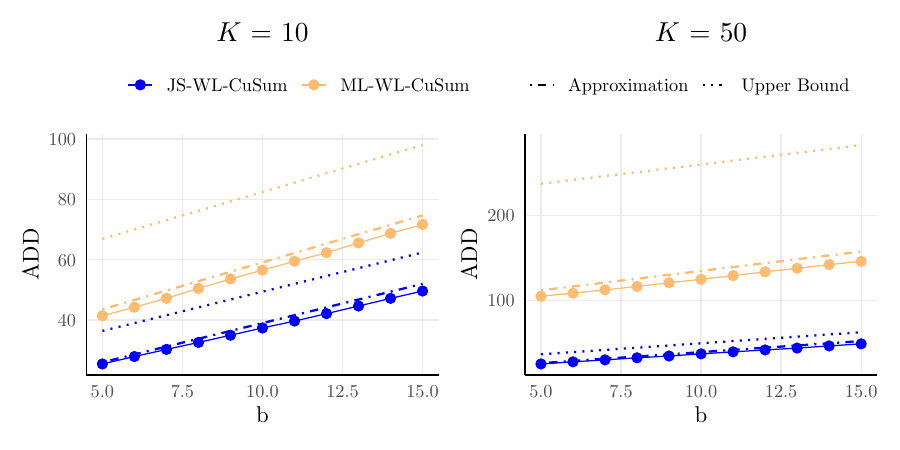}
\caption{The upper bounds of Theorem~\ref{thm:WL_UB} (dotted lines) and approximation in eq. \eqref{eq:simple_approx} compared against the true detection delay for a range of values of $b$. Both the upper bound and the approximation are smaller when using the JS estimator (blue) compared to the ML estimator (yellow). It is also seen that in particular when using the JS estimator, the analytical results accurately reflect the true detection delay (solid lines).}
\label{fig:bound_eval}
\end{figure}
}

\subsection{Change affects all streams}
\begin{figure}
    \centering
    \includegraphics{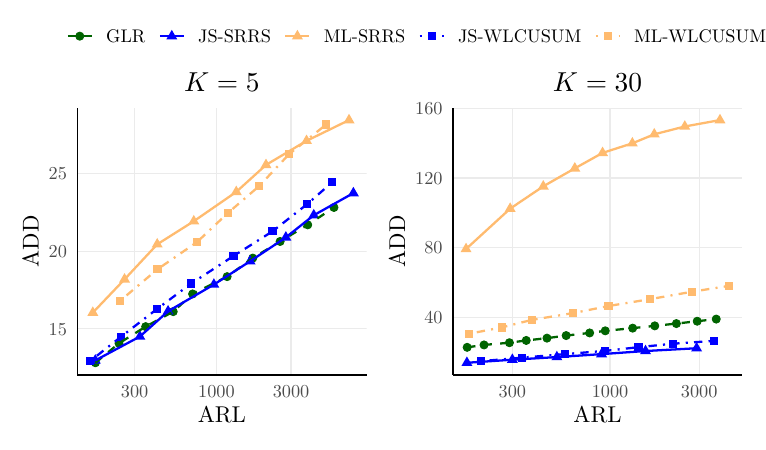}
    \caption{Tradeoff between the average run length to false alarm (ARL) and detection delay (ADD) for the studied tests with $K=5$ (left) and $K=10$ (right). James-Stein variants (blue) uniformly improve on the ML versions (gold). For $K=30$, the proposed tests are also significantly better than the GLR test \cite{LAI_1998} in terms of reduced detection delay.}
    \label{fig:arl_add}
\end{figure}
We consider a setting where the post-change mean vector $\btheta$ is given by $\Tilde{\btheta}/\norm{\Tilde{\btheta}}$ where $\Tilde{\btheta} = [1,2,...,K]^\top$. This depicts a scenario where the change affects all $K$ sensors in the network simultaneously with different intensities, with the total magnitude $\norm \btheta$ always equal to 1. In Figure \ref{fig:arl_add}, we plot the tradeoff between ARL and average detection delay (ADD) for the compared tests for $K = 5$ and $K = 30$. It is immediately observed that the James-Stein variants of the WL-CuSum and SRRS tests (in blue) perform noticeably better than the versions using maximum likelihood estimators (in gold). When $K = 5$, the JS-based tests perform comparably to the GLR test, but for $K = 30$ there exists a noticeable gap in favor of the proposed tests. The effect of the dimension $K$ is further illustrated in Figure \ref{fig:K_ADD}, where we fix $\gamma = 2000$, and vary $K$ from 5 to 50. Even though the detection problem should become more difficult as $K$ increases (as can be observed from the ML-based tests), the tests utilizing the James-Stein estimator suffer essentially no degradation in detection delay. This stems from the fact that shrinkage estimation is in general more powerful the higher the dimension. Therefore, utilizing the JS estimator is especially effective in large-scale problems. The JS-SRRS test performs slightly better than the JS-WL-CuSum test, which is perhaps not surprising since the SRRS test is also computationally more intensive than WL-CuSum. 

ML-SRRS \cite{LORDEN_2005} is seen to suffer drastically as $K$ increases. The ML-estimator requires a window size of at least $K/\norm{\btheta}^2 (=K$ here as $\norm\btheta^2=1$) to ensure a positive expectation for the summands in (13). In the post-change regime, for $n < t+K$, the statistics $\widehat\Lambda_{t,n}$ have a negative drift and unlike in WL-CuSum, the statistics do not restart at 0. As a result, the values can be well below zero before starting to grow.

\begin{figure}
    \centering
    \includegraphics{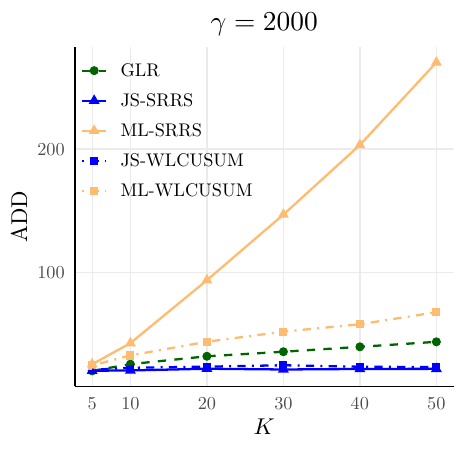}
    \caption{Detection delay against the dimension $K$, when $\norm\btheta = 1$ and $\gamma = 2000$. It is observed, that the higher the dimension, the larger the gap in performance in favor of the James-Stein tests. Hence, JS-based test scales well for larger number of streams and sensors.}
    \label{fig:K_ADD}
\end{figure}
\subsection{Sparse change}
Next, we study the case where the change affects only a subset of the data streams. We choose $\gamma = 2000, K = 20$ and $\btheta$ to be of the form $[\underset{k}{\underbrace{k^{-1/2},\ldots, k^{-1/2}}},0,\ldots,0]$ for $k \in [1,20]$ so that again $\norm\btheta = 1$ no matter how many streams $k$ are affected. The results are shown in Figure \ref{fig:varying_k}. For very sparse changes, the GLR test is superior to the proposed tests, but the roles are reversed when approximately 40\% or more of the streams are affected. For any number of streams affected, the WL-CuSum and SRRS tests are drastically improved by James-Stein estimation, corroborating the analytical results of previous sections. We note that only in the case where all $K=20$ data streams are affected does $\btheta$ lie in the prespecified target subspace $\bbV$ of the used James-Stein estimator. Hence, substantial gains can be achieved even when $\btheta \not\in \bbV$.
\begin{figure}
    \centering
    \includegraphics{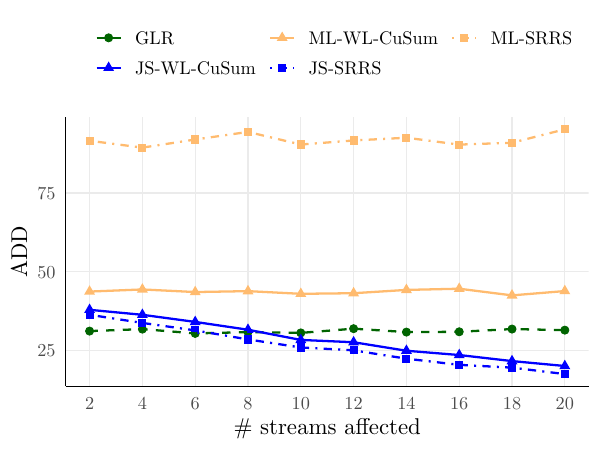}
    \caption{Average detection delay when the change affects a varying number of sensors out of 20, with total signal magnitude $\norm\btheta$ fixed at 1. JS-based tests are uniformly better than ML-alternatives and better than the GLR test when more than 40\% of the streams are affected.}
    \label{fig:varying_k}
\end{figure}

\color{black}
\subsection{Change detection with spatial structure}\label{subsec:spat_example}
As discussed earlier, the shrinkage target $\bbV$ used in the James-Stein estimator can theoretically be any linear subspace of $\mathbb{R}^K$ so long as $\dim(\bbV) < K -2$. In this example, we highlight how information regarding the structure of $\btheta$ can be used in choosing $\bbV$. Consider a system with $K$ sensors at locations $s_1,\ldots, s_K$ and $M < K-2$ sources located at $z_1, \ldots, z_M$. Suppose that the post-change signal magnitude $\btheta^{(k)}$ at the $k$th sensor is given by
\begin{equation}
    \btheta^{(k)} = \sum_{m=1}^M\beta_m h(s_k, z_m),
\end{equation}
where $\beta_m$ is an unknown scalar parameter representing the signal strength of the $m$th source, and $h(s,z)$ is a function describing the attenuation of the signal transmitted from $z$ at $s$. Generally, $h$ is some decreasing function of $\norm{s-z}$, for example $h(s,z) = C\norm{s-z}^{-2}$ in the case of electromagnetic waves propagating in free space, for some constant $C$. Under this model, the post-change mean can be written as $\btheta = \bZ\bbeta$ where $\mathbf{Z} \in \mathbb{R}^{K \times M}$ such that $\bZ_{km} = h(s_k, z_m)$ and $\bbeta = [\beta_1, \ldots, \beta_M]^\top \in \bbR^M$. The maximum likelihood estimate of $\btheta$ conditional on observing $\bX\sim \calN(\btheta, \bI)$ given the model is equal to $\thetaML_\bZ(\bX)~=~\bZ (\bZ^\top\bZ)^{-1}\bZ\bX$. This estimator could in principle be used as the estimator in the considered WL-CuSum or SRRS tests. However, such an estimator may not be robust to deviations from model assumptions, as can happen if, for example, the true source locations differ from the assumed \cite{ROBUST_BOOK}.

As an alternative, one can make use of the prior information by using the column space of $\bZ$ as the shrinkage target subpace $\bbV$ in the James-Stein estimator. Observing that with this choice of $\bbV$, the projection of $\bX$ onto $\bbV$ is given by $\projV(X) = \bZ (\bZ^\top\bZ)^{-1}\bZ\bX = \thetaML_\bZ(\bX)$, the JS estimator is given by
\begin{equation}\label{eq: sim_JSV}
    \thetaJSV(\bX) \define \left(\frac{w^{-1}(K-M-2)}{\norm{\bX - \thetaML_\bZ(\bX)}^2}\right)^+(\bX - \thetaML_\bZ(\bX)) + \thetaML_\bZ(\bX).
\end{equation}
We expect this estimator to estimate $\btheta$ effectively when $\btheta \approx \bZ\bbeta$, but still provide performance superior to the unconstrained ML-estimator $\thetaML(\bX) = \bX$ even if the assumed model is incorrect.

We consider an example with $K = 20$ uniformly spaced sensors on $[0, 100]$, that is, $s_k = 100(k-1)/(K-1), k \in \{1,\ldots,K\}$ \cite{HALME_TSP2022}. There are $M=2$ sources that are assumed to be located at $z_1 = 20$ and $z_2 = 80$. The signal is considered to attenuate in proportion to the inverse of the squared distance, i.e. $h(s,z) = \min(1,\norm{s-z}^{-2}).$ The source signal is set as $\bbeta = [\beta, \beta]^\top$, with $\beta$ is chosen such that $\norm\btheta = \bZ\bbeta = 1$. The WL-CuSum test is considered with four different estimators: 1) the global-mean shrinking JS estimator \eqref{eq:sim_global_JS} used in previous experiments, 2) subspace shrinking estimator $\thetaJSV$ defined in \eqref{eq: sim_JSV} which uses the spatial information in choosing the shrinkage target, 3) the ML-estimator under the assumed model $\thetaML_\bZ(\bX)$, which is also the least squares (LS) estimator, and 4) the ordinary maximum likelihood estimator that ignores the spatial model $\thetaML(\bX) = \bX$. Additionally, the GLR test is included as a comparison.

\begin{figure}

    \centering
    \includegraphics[width=0.7\linewidth]{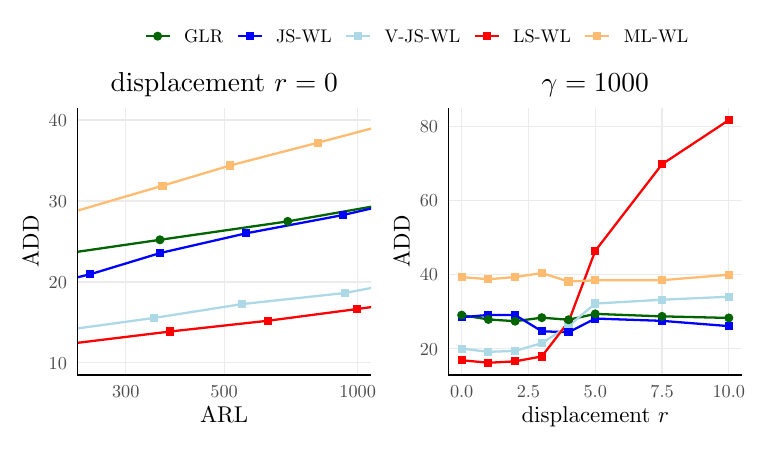}
    \caption{Left: Performance of the tests for various ARL levels under correct model assumptions. The LS-estimator (red) and $\thetaJSV$ (light blue) show the best detection performance. Right: Detection delay as a function of source displacement parameter $r$ for a fixed ARL level ($\gamma = 1000$). The LS-estimator performs best for small $r$, but its performance degrades with larger displacements. The detection delay of $\thetaJSV$ increases modestly with $r$, remaining bounded by the ordinary ML-based test (yellow).}
    \label{fig:spatial_sim}
\end{figure}

In the left plot of Figure \ref{fig:spatial_sim}, the performance of the tests is evaluated for a range of ARL levels when the above mentioned model assumptions hold. As may be expected, the two methods that use the specific data model, i.e. the LS-estimator (red) and $\thetaJSV$ (light blue) provide the best detection performance. In the right plot of Fig. \ref{fig:spatial_sim}, the performance of the methods is evaluated when the model assumptions are slightly violated. In particular, instead placing the sources at $s_1 = 20$ and $s_2 =80$, the sources are placed at $20+r$ and $80-r$, respectively, for various values of the displacement parameter $r$. The detection delay of the methods is displayed as a function of $r$ for a fixed ARL level of $\gamma = 1000$. When $r$ is small, meaning that the assumed model is accurate, the LS-estimator performs best, followed closely by the JS estimator that utilizes the signal model. However, once the source displacement is large enough, the LS-estimator-based test's performance rapidly degrades. The detection delay of $\thetaJSV$ also increases as the displacement increases, but crucially, the decrease is modest. Indeed, a large violation of the model assumptions only means that the chosen shrinkage target is unhelpful. The detection delay is still upper bounded by the delay of the ordinary ML-based test (yellow), as the analytical results of Section \ref{sec: main} indicate. The results demonstrate that the JS estimator with a shrinkage target chosen using system information offers a good trade-off between performance and robustness. 

\color{black}
\section{Conclusion}\label{sec: conclusion}
In this paper, we proposed and analyzed procedures for quickest detection of a mean-shift in parallel Gaussian data streams. A non-asymptotic upper bound on the detection delay of the WL-CuSum test was derived, and it was observed that the mean square error of the chosen estimator critically affects the ensuing detection delay. The JS-SRRS test was shown to be second-order asymptotically minimax, with the second-order term further improved if the true parameter value lies in the shrinkage target subspace. In simulations it was demonstrated that the proposed James-Stein estimator-based procedures achieve significant reductions in detection delay compared to maximum likelihood alternatives, especially if the problem is high-dimensional and the true parameter value is close to the chosen shrinkage target subspace. The suggested tests perform favorably compared with the GLR-CuSum test as well. Our analytical and experimental results indicate that replacing the conventional maximum likelihood estimator with the JS estimator in the suggested tests essentially enables the incorporation of prior information about the true parameter value through a shrinkage target. 
Even if the target is entirely irrelevant, the detection delay does not exceed what can be achieved using the maximum likelihood estimator.

{\color{black}
In this paper, only the Gaussian mean-shift setting with independent observations and diagonal covariance was explicitly considered. While seemingly simplistic, we stress that many more complicated Gaussian data models can be converted to this setting through appropriate transformations. For instance, cases with a known covariance can be handled by whitening the data, and time-dependent observations can similarly be transformed to an equivalent mean-shift setting by decorrelating the time series. These pre-processing steps allow the methodology to be applied in a much broader range of scenarios without fundamental modifications.

In future work, it would be of interest to extend the results of this paper to detecting changes in parameters other than the Gaussian mean, for example the covariance matrix \cite{XIE_SUBSPACE, CHAUDHURI_2024}. Moreover, extension to   probability models other than Gaussian would be of interest. Shrinkage estimators with domination properties similar to the JS estimator have been developed for more general classes of probability models, such as elliptically symmetric distributions \cite{FOURDRINIER_BOOK, BRANDWEIN_1993}. Extending the results of this paper to such models is an important topic of future research.
}

\section*{Acknowledgments}
The authors would like to acknowledge discussions with George Moustakides regarding the window-limited CuSum test.

\appendices

{\color{black} \section{Proof of Theorem \ref{thm:WL_UB}}\label{proof:WL_UB}
    Define the statistic $U_n$ recursively by
    \begin{equation}
        U_n = U_{n-1} + \bthetahat_{n-w, n}^\top \bX_n - \frac{1}{2}\norm{\bthetahat_{n-w, n}}^2, \quad U_0,...,U_w =0,
    \end{equation}
    and the associated stopping time $T'_b = \inf\{n : U_n > b\}$. The definition of $U_n$ is equal to the WL-CuSum test statistic \eqref{eq:WL_Gaus_def} but without the positive-part operation. As such, it is easy to see that $S_n \geq U_n$ and therefore $\TWL \leq T'_b$ for all $b$. Moreover, by \cite[Lemma 4]{XIE_2023} $\delay{\TWL} \leq \Extheta(\TWL)$, i.e. the worst-case delay for the WL-CuSum test corresponds to the case when the change is present from the beginning. Therefore, to obtain an upper bound on $\delay\TWL$ it suffices to bound $\Extheta(T'_b).$ \\
    Recall, that 
    \begin{equation}
    \Lambda^{\btheta}_{1,n} \define \sum_{m = 1}^n \log \frac{f(\bX_m, \btheta)}{f(\bX_m, \bm 0)} = \sum_{m = 1}^n\left( {\btheta}^\top \bX_m - \frac{1}{2} \norm{\btheta} ^2\right)
    \end{equation}
    denotes the log-likelihood ratio between $\P_1^\btheta$ and $\P_\infty$ at time $n$.
    In what follows we write $\Lambda^{\btheta}_{n} =  \Lambda^{\btheta}_{1,n}$ and $\bthetahat_n= \bthetahat_{n-w,n}$ for brevity.
    Since $\Lambda^{\btheta}_{n}$ is a sum of $n$ i.i.d. random variables with mean $\KL$ under $\P_1^\btheta$ \eqref{eq: KL_def}, by Wald's equation 
    \begin{align}
        \Extheta(T')\KL &= \Extheta(\Lambda^{\btheta}_{T'}) \\
        &= \Extheta(U_{T'}) + \Extheta(\Lambda^{\btheta}_{T'} - U_{T'}).\label{eq: wald_decomposition}
    \end{align}
    The first term can be bounded by bounding the overshoot of the statistic $U_{T'}$ over the stopping threshold $b$. We make use of the fact that for a random variable $Z \sim \calN(\mu, \tau^2)$,
    \begin{equation}\label{eq: Gaus_resticted_mean}
        h(t) \define \Ex(Z + t \given Z  > -t) = \mu +t + \tau\frac{\varphi((t-\mu)/\tau)}{1-\Phi((t-\mu)/\tau)},
    \end{equation}
    where $\varphi$ and $\Phi$ denote the pdf and cdf of a standard Gaussian distribution, respectively. The function $\frac{\varphi(x)}{1-\Phi(x)}$ is the reciprocal of the Mills ratio \cite{GASULL_MILLS}, for which we need the upper bound \cite{SAMPFORD}
    \begin{equation}\label{eq: mills_bound}
        \frac{\varphi(x)}{1-\Phi(x)} \leq \frac{2}{\sqrt{x^2+4}-x} = \frac{\sqrt{x^2+4}+x}{2}.
    \end{equation}Writing $r \define U_{T'-1} -b$ and $\ell_n \define \bthetahat_{n}^\top \bX_{n} - \frac{1}{2}\norm{\bthetahat_{n}}^2$, we have
    \begin{align}
        \Extheta(U_{T'} -b) &= \Extheta(r + \ell_{T'}) =\Extheta(\Extheta(r + \ell_{T'} \given \calF_{T'-1}, T)) \\
         &\leq \Extheta\left(\sup_{r\leq0}\left( r + \Extheta(\ell_{T'} \given \bthetahat_{T'-1}, \ell_{T'} > -r)\right)\right) \\
         &= \Extheta\left(\Extheta(\ell_{T'} \given \bthetahat_{T'}, \ell_{T'} > 0)\right),
    \end{align}
    where the inequality holds since $r$ is $\calF_{T'-1}$-measurable and $\ell_{T'}$ depends on $\calF_{T'-1}$ only via $\bthetahat_{T'}.$ The final equality follows from the fact that $\ell_{T'} \given \bthetahat_{T'}$ is Gaussian and $h(t)$ \eqref{eq: Gaus_resticted_mean} is increasing in $t$, which may be verified by observing that $h'(t) > 0$.

    Since $\ell_{T'} \given \bthetahat_{T'} \sim \calN(\bthetahat_{T'}^\top\btheta - \frac{1}{2}\norm{\bthetahat_{T'}}^2, \norm{\bthetahat_{T'}}^2)$, we have using \eqref{eq: Gaus_resticted_mean} and \eqref{eq: mills_bound}
    \begin{align}
        \Extheta(\ell_{T'} \given \bthetahat_{T'}, \ell_{T'} > 0) &\leq \norm{\bthetahat_{T'}}g(\bthetahat) + \frac{\norm{\bthetahat_{T'}}}{2}\left(\sqrt{g(\bthetahat)^2 + 4} - g(\bthetahat)\right) \\
        &= \frac{\norm{\bthetahat_{T'}}}{2}\left(g(\bthetahat) + \sqrt{g(\bthetahat)^2 + 4}\right)\label{eq: Ex_using_g}
    \end{align}
    where
    \begin{align}
        g(\bthetahat) = \frac{\bthetahat_{T'}^\top\btheta - \frac{1}{2}\norm{\bthetahat_{T'}}^2}{\norm{\bthetahat_{T'}}}.
    \end{align}
    It is seen that \eqref{eq: Ex_using_g} is increasing in $g$, and $g(\bthetahat) \leq \norm{\btheta} - \frac{1}{2}\norm{\bthetahat_{T'}}$ by the Cauchy-Schwarz inequality. Inserting the upper bound for $g(\bthetahat)$ into \eqref{eq: Ex_using_g} we obtain
    \begin{equation}\label{eq: E_l_scalar}
        \Extheta(\ell_{T'} \given \bthetahat_{T'}, \ell_{T'} > 0) \leq \frac{\norm{\bthetahat_{T'}}}{2}\left(\norm{\btheta} - \frac{1}{2}\norm{\bthetahat_{T'}} + \sqrt{\left(\norm{\btheta} - \frac{1}{2}\norm{\bthetahat_{T'}}\right)^2+4}\right),  \end{equation}
    which depends on $\bthetahat_{T'}$ only via the scalar $\norm{\bthetahat_{T'}}$. Somewhat tedious but elementary calculus shows that the value of $\norm{\bthetahat_{T'}}$ that maximizes \eqref{eq: E_l_scalar} is $\norm{\bthetahat_{T'}} = \norm\btheta + 4/\norm\btheta$. Inserting this back into \eqref{eq: E_l_scalar} and rearranging we obtain
    \begin{align}
        \Extheta(\ell_{T'} \given \bthetahat_{T'}, \ell_{T'} > 0) &\leq \frac{1}{2}\left(\norm\btheta^2 + 4\right) \\
        &= \KL + 2
    \end{align}
    and therefore
    \begin{align}
        \Extheta(U_{T'} -b) &\leq \Extheta\left(\Extheta(\ell_{T'} \given \bthetahat_{T'}, \ell_{T'} > 0)\right) \\
        &\leq \Extheta\left(\KL + 2\right) = \KL + 2.\label{eq: overshoot_final}
    \end{align}
         
    For the second term in \eqref{eq: wald_decomposition}, since $U_1,...,U_w = 0$
    \begin{align}
        \Extheta(\Lambda^{\btheta}_{T'} - U_{T'}) &= \Extheta(\Lambda^\btheta_w -U_{w}) + \Extheta(\Lambda^\btheta_{T'}-\Lambda^\btheta_w -U_{T'} + U_w)\\
        &= w\KL + \Extheta\left(\sum_{n=w+1}^{T'} {\btheta}^\top \bX_n - \frac{1}{2} \norm{\btheta}^2-{\bthetahat_n}^\top \bX_n - \frac{1}{2} \norm{\bthetahat_n}^2\right) \\
        &= w\KL + \frac{1}{2}\Extheta\left(\sum_{n=w+1}^{T'}\norm{\bthetahat_n - \btheta}^2\right).\label{eq:ex_diff_intermediate}
    \end{align}
    Since the estimator sequence $\bthetahat_{w+1}, \bthetahat_{w+2}, \ldots$ is dependent, and $T'$ depends on this process, the sum on the last line is difficult to analyze precisely. An upper bound is found by utilizing the fact that the terms in the process $w$-dependent, i.e. $\bthetahat_{n+w}$ is independent of $\bthetahat_{n}$. Then, using a technique as in \cite{XIE_2023}
    \begin{align}
        \Extheta\left(\sum_{n=w+1}^{T'}\norm{\bthetahat_n - \btheta}^2\right) &\leq \Extheta\left(\sum_{n=w+1}^{T'+w}\norm{\bthetahat_n - \btheta}^2\right) \\
        &= \Ex\left(\sum_{n=w+1}^\infty \norm{\bthetahat_n - \btheta}^2\mathbf{1}_{\{T' \geq n-w\}}\right) \\
        &= \Ex\left(\sum_{n=w+1}^\infty \Extheta(\norm{\bthetahat_n - \btheta}^2|\calF_{n-w-1})\mathbf{1}_{\{T' \geq n-w\}}\right) \\
        &= \Ex\left(\sum_{n=w+1}^\infty \Extheta\norm{\bthetahat_n - \btheta}^2\mathbf{1}_{\{T' \geq n-w\}}\right)\\
        &= \MSE{\bthetahat_{w+1}}\Extheta(T'). \label{eq:mse_sum_final}
    \end{align}
    and hence
    \begin{equation}
        \Extheta(\Lambda^{\btheta}_{T'} - U_{T'}) = w\KL+\MSE{\bthetahat_{w+1}}\Extheta(T').\label{eq: ex_diff_complete}
    \end{equation}
    Inserting \eqref{eq: overshoot_final} and \eqref{eq: ex_diff_complete} into \eqref{eq: wald_decomposition} gives
    \begin{align}
        \Extheta(T')\KL \leq w\KL+\MSE{\bthetahat_{w+1}}\Extheta(T') +  \KL + 2.
    \end{align}
    Rearranging and dividing by $\KL - \MSE{\bthetahat_{w+1}}$, which is positive by assumption, yields
    \begin{equation}
        \delay{\TWL} \leq \Extheta(T') \leq \dfrac{b+(w+1)\KL +2}{\KL - \MSE{\bthetahat_{w+1}}}.
    \end{equation}
}

\section{Proof of Theorem \ref{theorem:SRRS}}\label{proof:SRRS_theorem}
First, we prove that $\Ex_\infty(\TSRRS) \geq e^b$. Denoting the test statistic sequence by $\{R_n\}$, i.e. $R_n = \sum_{t = 1}^n e^{\widehat\Lambda_{t, n}}$, it is observed that 
\begin{align}
    \Ex_\infty\left(R_n | \mathcal{F}_{n-1}\right) &= \Ex_\infty\left(e^{\widehat\Lambda_{n,n}} + \sum_{t=1}^{n-1}e^{\widehat\Lambda_{t, {n-1}}}\cdot\frac{f(\bX_n,\bthetahat_{t,n})}{f(\bX_n, \bm 0)} \given[\Big] \mathcal{F}_{n-1})\right) \\
    &= 1 + \sum_{t=1}^{n-1}e^{\widehat\Lambda_{t, {n-1}}}\underset{1}{\underbrace{\Ex_\infty\left[\frac{f(\bX_n,\bthetahat_{t,n})}{f(\bX_n, \bm 0)}\given[\Big] \mathcal{F}_{n-1}\right]}} \\
    &= 1 + R_{n-1}.
\end{align}
Therefore $\{R_n - n\}$ is a $\P_\infty$-martingale, and by the optional sampling theorem \cite[Th. 2.3.2]{TARTAKOVSKY_BOOK} $\Ex_\infty(\TSRRS) = \Ex_\infty(R_\TSRRS) \geq e^b$. A similar argument for bounding the ARL of a change detection procedure is found in many references and textbooks, see e.g. \cite[8.2.1]{TARTAKOVSKY_BOOK}.

For the detection delay, observe that the worst case detection delay for the SRRS test occurs when the change happens at $\nu = 1$, i.e. $\delay\TSRRS = \Ex_1^\btheta\left(\TSRRS\right)$. In what follows we write $\TSRRS = T$, $\Lambda^\btheta_{1,n} = \Lambda^\btheta_{n}$ and $\bthetahat_{1,n} = \bthetahat_{n}$ for brevity.
By Wald's equation
\begin{align}\label{eq:wald}
    \KL\Ex_1^\btheta(T) &= \Ex_1^\btheta(\Lambda^\btheta_{T})\nonumber \\
    &=\Ex_1^\btheta(\widehat\Lambda_{T}) + \Ex_1^\btheta\left(\Lambda^\btheta_{T} - \widehat\Lambda_{T}\right).
\end{align}

The first term in \eqref{eq:wald} can be written as $b +\Ex_1^\btheta(\widehat\Lambda_{T}-b)$, where the overshoot $\Ex_1^\btheta(\widehat\Lambda_{T}-b)$ can be bounded by a constant independent of $b$ using a similar argument as was used in the proof of Theorem~\ref{thm:WL_UB} for bounding the overshoot.

For the second term in \eqref{eq:wald},
\begin{align}
    \Ex_1^\btheta(\Lambda^\btheta_{T} - \widehat\Lambda_{T})
    &= \Ex_1^\btheta\left(\sum_{n=1}^{T} \btheta^\top\bX_n - \frac{1}{2}\thetanorm - \bthetahat_{n}^\top \bX_n + \frac{1}{2}\norm{\bthetahat_n}\right) \\
    &= \Ex_1^\btheta\left(\sum_{n=1}^{T} \frac{1}{2}\thetanorm - \bthetahat_{n}^\top\btheta + \frac{1}{2}\norm{\bthetahat_n} \right), \\
    &=\frac{1}{2} \Ex_1^\btheta  \left(\sum_{n=1}^{T}\lVert\btheta- \bthetahat_{n}\rVert^2\right).
\end{align}
where the second equality follows by iterated expectation. From \cite{LORDEN_2005} we have the first order result 
\begin{equation}\label{eq:first_order}
    \Ex_1^\btheta(T) = \frac{b}{\KL}(1+o(1)).
\end{equation}
Using \eqref{eq:first_order} and modifying arguments from Lemma 16 of \cite{ROBBINS_1974} to a vector-valued $\btheta$ we can show that for $\eta_\delta = (1+\delta)b/\KL$ and any $\delta > 0,$ $\P_1^\btheta(T > \eta_\delta) \to 0$ sufficiently fast as $b \to \infty$ so that
\begin{equation}
   \Ex_1^\btheta  \left(\sum_{n=1}^{T}\lVert\btheta- \bthetahat_{n}\rVert^2\right) \leq \sum_{n=1}^{\eta_\delta}\Ex_1^\btheta\lVert\btheta- \bthetahat_{n}\rVert^2 + O(1).
\end{equation}

Applying \eqref{eq:MSE_condition}, we obtain
\begin{align}
\Ex_1^\btheta(\Lambda^\btheta_{T} - \widehat\Lambda_{T}) &=\frac{1}{2}\sum_{n=1}^{\eta_\delta}\Ex_1^\btheta\lVert\btheta- \bthetahat_{n}\rVert^2 + O(1) \\
    &= \frac{1}{2}\sum_{n=1}^{\eta_\delta}\MSE{\bthetahat_{n+1}} + O(1) \\
    &\leq\frac{1}{2} \sum_{n=2}^{\eta_\delta}\frac{\kappa(\btheta)}{n} + O(1) \\
    &= \frac{\kappa(\btheta)}{2} \log \eta_\delta + O(1) \\
    &= \frac{\kappa(\btheta)}{2}\log b + O(1). \label{eq: mse_sum_final}
\end{align}
Substituting \eqref{eq: mse_sum_final} into \eqref{eq:wald} and choosing $b = \log\gamma$ completes the proof.

\bibliographystyle{IEEEtran}
\bibliography{refs}
\end{document}